\newtheorem{theorem}{Theorem}[section]
\newtheorem{lemma}[theorem]{Lemma}
\theoremstyle{definition}
\theoremstyle{remark}
\newtheorem{remark}{\bf Remark}[section]
\numberwithin{equation}{section}
\newcommand{\R}{I\!\!R}
\newcommand{\bw}{{\bf w}}
\newcommand{\bq}{{\bf q}}
\newcommand{\bW}{{\bf W}}
\newcommand{\bt}{\boldsymbol\tau}
\newcommand{\bs}{\boldsymbol\sigma}
\newcommand{\eu}{\eta_u}
\newcommand{\eq}{\boldsymbol\eta_{\bf q}}
\newcommand{\es}{\boldsymbol\eta_{\boldsymbol\sigma}}
\newcommand{\xu}{\xi_u}
\newcommand{\xq}{\boldsymbol\xi_{\bf q}}
\newcommand{\xs}{\boldsymbol\xi_{\boldsymbol\sigma}}
\newcommand{\hxu}{\hat{\xi}_u}
\newcommand{\hxq}{\hat{\boldsymbol\xi}_{\bf q}}
\newcommand{\hxs}{\hat{\boldsymbol\xi}_{\boldsymbol\sigma}}
\newcommand{\imh}{{i-1/2}}
\newcommand{\iph}{{i+1/2}}
\newcommand{\xUh}{\hat{\xi}_U}
\newcommand{\xQh}{\hat{\boldsymbol\xi}_\Q}
\newcommand{\xZh}{\hat{\boldsymbol\xi}_\Z}
\newcommand{\eU}{\eta_U}
\newcommand{\eQ}{\boldsymbol\eta_\Q}
\newcommand{\eZ}{\boldsymbol\eta_\Z}
\newcommand{\xU}{\xi_U}
\newcommand{\xQ}{\boldsymbol\xi_\Q}
\newcommand{\xZ}{\boldsymbol\xi_\Z}
\newcommand{\ta}{\tau^\ast}
\newcommand{\A}{\mathcal {A}}
\newcommand{\B}{\mathcal {B}}
\newcommand{\ii}{{\boldsymbol \epsilon}}
\newcommand{\hii}{\hat{\boldsymbol \epsilon}}
\newcommand{\Bnj}{{\tilde B}^\nph_\jph}
\newcommand{\ph}{{\partial}_t}
\newcommand{\pc}{{\delta}_t}
\newcommand{\pb}{\bar{\partial}_t}
\newcommand{\ps}{{\partial}_{t}^2}
\newcommand{\half}{{1/2}}
\newcommand{\Z}{{\boldsymbol Z}}
\newcommand{\Q}{{\boldsymbol Q}}
\newcommand{\U}{{U}}
\newcommand{\dt}{k}
\newcommand{\qr}{1/4}
\newcommand{\nph}{{n+1/2}}
\newcommand{\msph}{{m^{\star}+1/2}}
\newcommand{\jph}{{j+1/2}}
\newcommand{\nmh}{{n-1/2}}
\newcommand{\nq}{{n;1/4}}
\newcommand{\ah}{A^\half}
\newcommand{\ce}{{\mathcal E}_{\B}}
\newcommand{\hce}{\hat{\mathcal E}_{\B}}
\newcommand{\sQ}{R}
\newcommand{\sZ}{S}
\begin{document}
%\begin{center}
\title[An $hp$-DGM for Linear Hyperbolic Integro-Differential Equations]{ \bf A  Priori $hp$-estimates for  Discontinuous Galerkin Approximations to Linear Hyperbolic Integro-Differential Equations  }
\author{Samir Karaa}
\address{Department of Mathematics and Statistics, Sultan Qaboos University, P. O. Box 36, Al-Khod 123,
Muscat, Oman}
\email{skaraa@squ.edu.om}
\author{   Amiya K. Pani}
\address{Department of Mathematics, Industrial Mathematics Group, Indian
Institute of Technology Bombay, Powai, Mumbai-400076}
\email{akp@math.iitb.ac.in}
\author{ Sangita Yadav}
\address{Department of Mathematics,
Birla Institute of Technology and Science Pilani,
Pilani, Rajasthan-333031}
\email{sangita.iitk@gmail.com}
\date{}
\maketitle

%\end{center}

\begin{abstract}
An $hp$-discontinuous Galerkin (DG) method is applied to a class of
second order linear hyperbolic integro-differential
equations. Based on the analysis of an expanded mixed type
Ritz-Volterra projection, {\it a priori} $hp$-error
estimates in  $L^{\infty}(L^2)$-norm of the velocity
as well as of the displacement, which are optimal in the discretizing parameter
$h$ and suboptimal in the degree of polynomial $p$ are derived. For optimal estimates of the displacement in $L^{\infty}(L^2)$-norm with reduced regularity on the exact solution, a variant of Baker's nonstandard energy formulation is developed and analyzed. Results on order of convergence which are similar in spirit to linear elliptic and parabolic problems are established for the semidiscrete case after suitably modifying the
numerical fluxes. For the completely discrete scheme, an implicit-in-time procedure is formulated, stability results are derived and {\it a priori} error estimates are discussed. Finally, numerical experiments on two dimensional domains are
conducted which confirm the theoretical results.
\end{abstract}

%\keywords
{\bf Keywords} - {\small Local discontinuous Galerkin method, linear second order hyperbolic integro-differential
equation, nonstandard formulation, semidiscrete and completely discrete
schemes, mixed type Ritz-Volterra projection, role of stabilizing parameters,
$hp$-error estimates, order of convergence, numerical experiments.}

\section{Introduction}
In this paper, we discuss discontinuous Galerkin (DG) methods which include the local discontinuous Galerkin method (LDG)  for the following second order linear hyperbolic integro-differential equation:
\begin {eqnarray}
\quad u_{tt}-\nabla\cdot\Big(A(x)\nabla u +\int_{0}^{t} B(x,t,s) \nabla u(s) \,ds\Big)
={f}(x,t) &&~~~\mbox{in}~ \Omega \times (0~ T],\label{1} \\
u(x,t)= 0\hspace{0.8cm}&&~~~\mbox{on}~\partial \Omega\times (0~T],\label{2}\\
u_{|t=0}= u_0 \hspace{.6cm}&&~~~\mbox{in}~\Omega,\label{3}\\
u_{t|t=0}= u_1 \hspace{.6cm}&&~~~\mbox{in}~\Omega,\label{4}
\end{eqnarray}
where $u_{tt}=({\partial^2 u}/{\partial t^2}),$ and $f$, $u_0,\, u_1$ are given functions. We assume that $\Omega$ is a bounded convex domain in
$\R^2$ with boundary $\partial\Omega$, $ {A(x)}=[a_{ij}(x)]$ is a $2\times 2$ positive
definite matrix such that there exists a positive constant $\alpha$ with
$ (A(x)\xi,\xi)\geq \alpha |\xi|^2,\, 0\neq \xi \in \R^2$ and ${B(x,t,s)}=[b_{ij}(x,t,s)]$
is a $2\times 2$ matrix. Further, assume that all coefficients of
 $A$ and $B$ are smooth and bounded functions with bounded derivatives in their respective
 domain of definitions say by a positive constant $M$. Such classes of problems and nonlinear version, thereof,  arise naturally
in many applications, such as, in viscoelasticity, see \cite{RHN} and reference, therein.
%\cite{LTW}-\cite{PTW} and \cite{TZ}.

In literature, finite Galerkin methods are applied to  hyperbolic
integro- differential equations and  {\it a priori} $h$-error estimates have been extensively studied for the
problem (\ref{1})-(\ref{4})
by Cannon {\it et al.}~\cite{CLX}, Pani {\it et. al.}~\cite{PSO}-\cite{PTW}, Lin {\it et
al.}~\cite{LTW}, Sihna \cite{S}, Sinha and Pani \cite{SP}, Yanik and Fairweather~\cite{YF}.

Of late, there has been a lot of activities in discontinuous Galerkin (DG) methods for
approximating solutions of partial differential equations. This  is mainly due to their
flexibility in local mesh adaptivity and in taking care of  nonuniform degrees of
approximation of the solution whose smoothness may exhibit wide variation  over the
computational domain. Like finite volume element methods,  these methods  are element-wise
conservative, but are ideally suited to $hp$-adaptivity. For application of DG methods to elliptic and parabolic problems, we may refer Cockburn {\it et al.} \cite{cks} for review of development of DG methods. One such DG method is the
local discontinuous Galerkin (LDG) method which allows for arbitrary meshes with hanging nodes,
elements of various shapes and piecewise polynomials of varying degrees. Earlier, Cockburn
and Shu~\cite{cs} have introduced this method for convection-diffusion problems and
subsequently, $hp$-version error estimates are derived by Castillo {\it et al.}~\cite{ccss}.
The LDG method was then extended to elliptic problems by Cockburn {\it et al.}~\cite{ccps},
Perugia and Sch$\ddot o$tzau~\cite{ps} and Gudi {\it et al.}~\cite{gnp}. In
\cite{ccps}, optimal order of convergence of LDG method applied to a Poisson equation
has been established. Subsequently, Perugia and Sch$\ddot o$tzau~\cite{ps} have discussed  {\it a priori} $hp$-error estimates for linear elliptic problems
and then,  Gudi {\it et al.}~\cite{gnp} have derived $hp$-error
estimates  for nonlinear elliptic problems. For higher order partial differential equations using
LDG method, see \cite{chen-shu,cs,lsy,xs,xs1} and references, therein. 

In this paper, $hp$-DG methods which, in particular, include the original LDG scheme,
are applied to the problem (\ref{1})-(\ref{4}). Further, it is observed that if polynomials of degree at least $p$ are used in all the
elements, the rates of convergence in the $L^{\infty}(L^2)$-norm of
the displacement $u$ and its velocity $\bq=\nabla u$ are of order $p+1/2$ and $p$,
respectively, provided the stabilization parameter $C_{11} =O(1)$ with $C_{22}=0.$
When $C_{11}=O(1/h),$ it is shown that the order of convergence of $u$ is  $p+1.$
Based on expanded mixed type Ritz-Volterra projection as an intermediate solution, optimal estimates
are derived. Using a variant of Baker's  nonstandard formulation, {\it a priori} estimates in $L^{\infty}(L^2)$
norm for the displacement are established with reduced regularity conditions on the exact solution.
All the above results are proved for semi-discrete method. Then, an implicit-in-time method is applied to the semi-discrete  scheme to provide a completely discrete method and stability results are proved. Again a use of a modified Baker's argument combined with a more finner analysis to take care of integral term yields
{\it a priori} estimates  for the displacement in $\ell^{\infty}{L^2}$-norm.
Finally, some numerical experiments for the LDG method have been performed
with different degrees of polynomials and numerical results are presented
to support the theoretical results. Our $hp$-estimates are  also valid for second order wave equations only by making $B\equiv 0.$ For applications of other DG methods to wave equations, we may refer
to \cite{rw1}-\cite{rw2}, \cite{gss}-\cite{gs}.

Throughout this paper, we denote $C$ as a generic positive constant which does
not depend on the discretizing parameter $h$ and degree of polynomial $p,$ but may vary
from time to time.

The article is organized as follows. Section 2 deals with  preliminaries
and basic results to be used subsequently in the rest of the article. In Section 3, we formulate
DG methods. Section 4 is devoted to an extended mixed type Ritz-Volterra
projection and related estimates. In Section 5, we discuss {\it a priori} error estimates for the semidiscrete scheme. Section 6 focuses on the completely discrete scheme based on an implicit method and related
error estimates are derived. In Section 7,
some numerical experiments are conducted to confirm the theoretical results.

 \section{Preliminaries}
Let $ \mathcal{T}_h=\{K_i:1\leq i\leq N_h\},\,\, 0< h< 1$ be a family of triangulation consisting of shape regular finite elements, which decompose the domain $\Omega$ into a finite number of simplexes $K_i,$ where $K_i$ is either a triangle or rectangle. It is, further, assumed that the family of triangulations  satisfies bounded local variation conditions on mesh size and on polynomial degree.
Let $h_i$ be the diameter of $ K_i$ and $ h= \max\{ h_i:1 \leq i\leq N_h\}$.
 We denote the set of interior edges of $\mathcal{T}_h $ by $ \Gamma_I=\{e_{ij}:e_{ij}=
\partial K_i\cap \partial K_j,~ |e_{ij}| > 0\}$ and boundary edges by $\Gamma_
\partial=\{e_{i\partial}:e_{i\partial}=\partial K_i\cap
\partial \Omega,~|e_{i\partial}|>0\} $, where $|e_k|$ denotes the one dimensional
 Euclidean measure. Let $ \Gamma=\Gamma_I\cup\Gamma_\partial$.
Note that our definition of $e_k$ also includes hanging nodes along each side of
 the finite elements. On this subdivision $\mathcal{T}_h$, we define the following broken Sobolev spaces
$$ V=\{v\in L^2(\Omega): v_{|_{K_i}} \in H^1(K_i),~ \forall~ K_i\in \mathcal{T}_h\},$$
and
$${\bf{W}} = \{{\bf{w}} \in {\bf L}^2(\Omega): {\bf{w}}_{ |_{K_i}}\in {\bf H}^1(K_i), ~\forall~ K_i \in \mathcal{T}_h\},$$
where $H^1(K_i)$ is the standard Sobolev space of order one defined on $K_i$, ${\bf L}^2(\Omega)=(L^2(\Omega))^2$ and ${\bf H}^1(K_i)=(H^1(K_i))^2$. The associated broken norm and
 seminorm on $V$ are defined, respectively, as
$$ \|v\|_{H^1(\mathcal{T}_h)}= \left ( \sum_{i=1}^{N_h}\|v\|^2_{H^1(K_i)} \right)^{\frac{1}{2}} \mbox{and}~~ |v|_{H^1(\mathcal{T}_h)}=\left(\displaystyle\sum_{i=1}^{N_h} |v|^{2}_{H^1(K_i)}\right)^\frac{1}{2}.$$\\
We denote the $L^2$-inner product by $(\cdot,\cdot)$ and the norm by $\|\cdot\|$. We also use broken Sobolev spaces
$$H^r(\mathcal{T}_h)=\{v\in L^2(\Omega):\sum_{i=1}^{N_h}\|v\|^2_{H^r(K_i)}<\infty\},$$ with norm $\|v\|_{H^r(\mathcal{T}_h)}=\Big(\displaystyle\sum_{i=1}^{N_h}\|v\|^2_{H^r(K_i)}\Big)^\frac{1}{2}$.\\
Further, we define for a Hilbert space $X$
$$L^p(0,T;X)=\{\phi:[0, T]\rightarrow X:\int_0^T\|\phi(t)\|^p_X\,dt<\infty\},$$
with norm for $1\le p<\infty$
$$\|\phi\|_{L^p(0,T;X)}=\Big(\int_0^T\|\phi(t)\|^p_X\,dt\Big)^\frac{1}{p},$$
and for $p=\infty$,
$$\|\phi\|_{L^\infty(0,T;X)}=\displaystyle\mbox{ess sup}_{t\in(0,T)} \|\phi(t)\|_X.$$
For notational convenience, we denote $L^p(0,T;X)$ as $L^p(X)$.

Let $e_k\in \Gamma_I$, that is $e_k=\partial K_i\cap\partial K_j$ for some neighboring simplexes  $K_i$ and $K_j$. Let ${\boldsymbol\nu}_i$ and ${\boldsymbol\nu}_j$
be the outward normals to the boundary $\partial K_i$ and $\partial K_j$, respectively. On $e_k$, we now define
the jump and average of $v\in V$ as
$$ [\![v]\!]=v_{|_{K_i}}{\boldsymbol\nu}_i+v_{|_{K_j}}{\boldsymbol\nu}_j,\hspace{.5cm} \{\!\!\{v\}\!\!\}=\frac{v_{|_{K_i}}+v_{|_{K_j}}}{2},$$
%$$ \{v\}=\frac{v|_{K_i}+v|_{K_j}}{2}$$
respectively, and for $\bf{w}\in \bf{W}$, the jump and average are defined as
$$[\![\bw]\!]=\bw_{|_{K_i}}\cdot{\boldsymbol\nu}_i+\bw_{|_{K_j}}\cdot{\boldsymbol\nu}_j,\hspace{.5cm}\{\!\!{\{\bf w }\}\!\!\}=\frac{\bw_{|_
{K_i}}+\bw_{|_{K_j}}}{2}.$$
%$${\{\bf{w}\}}=\frac{{\bf{w}}|_{K_i}+{\bf{w}}|_{K_j}}{2}$$
In case, $e_k\in \partial\Omega$, that is, there exists $K_i$ such that $e_k=\partial K_i\cap
\partial\Omega$, then set the jump and average for $v$ as
$$ [\![v]\!]=v_{|_{K_i\cap\partial\Omega}}\boldsymbol \nu,\hspace{.5cm} \{\!\!\{v\}\!\!\}=v_{|_{K_i\cap\partial\Omega}},$$
respectively, and for $\bw\in{\bW}$, the jump and average are defined respectively by
$$ [\![\bw]\!]=\bw_{|_{K_i\cap\partial\Omega}}\cdot\boldsymbol\nu,\hspace{.5cm} \{\!\!\{\bw\}\!\!\}=\bw_{|_{K_i\cap\partial
\Omega}},$$
where $\boldsymbol\nu$ is the outward normal to the boundary $\partial\Omega$. Let $P_{p_i}(K_i)$ be the
 space of polynomials of degree less than or equal to $p_i$ on each triangle $K_i\in \mathcal{T}_h$ and $Q_{p_{i}}(K_i)$
be the space of polynomials of degree less than or equal to $p_i$ in
each variable which are defined on the rectangles $K_i\in \mathcal{T}_h$. The discontinuous
finite element spaces are considered as
$$ V_h=\{v_h\in L^2(\Omega):{v_h}_{|_{K_i}}\in Z_{p_i}(K_i)\},$$
and
$$ {\bW}_h=\{\bw_h\in{\bf L}^2(\Omega):{\bw_h}_{|_{K_i}}\in {\bf Z}_{p_i}(K_i)\},$$
 where ${\bf Z}_{p_i}(K_i)=(Z_{p_i}(K_i))^2, $ $p_i\geq 1$ and $Z_{p_i}(K_i)$ is either $P_{p_i}(K_i)$ or $Q_{p_{i}}(K_i)$. For any $e_k
 \in \Gamma_I $, there are two elements $K_i$ and $K_j$ such that $e_k=\partial K_i\cap \partial K_j$.
 We associate $p_k$ to $e_k$ where $p_k=\frac{p_i+p_j}{2}$.
 For $e_k \in \Gamma_\partial$, since there is one element $K_i$ such that $e_k=\partial K_i\cap \partial
  \Omega$, we write $p_k=p_i$.
We also denote $ p=\displaystyle\min_{1\leq i\leq N_h}{p_i}.$

Below, we state a Lemma without proof on the approximation properties of the finite
element spaces. For a  proof, refer to \cite{BS}.
\begin{lemma}\label{Ih-proj}
 For $\phi\in (H^{r_i}(K_i))^d,~d=1,~2$, there exist a positive constant
$C_A$ depending on $r_i,$ but independent of $\phi, p_i$ and $h_i$
 and a sequence $\phi_{p_i}^h\in (Z_{p_i}(K_i))^d, p_i\geq 1$, such that
\begin{itemize}
\item [(i)] for any $0\leq l\leq r_i$,
$$ \|\phi-\phi^h_{p_i}\|_{(H^l(K_i))^d}\leq C_A\frac{h_i^{\min\{r_i,p_i+1\} -l}}
{p_i^{r_i-l}}\|\phi\|_{(H^{r_i}(K_i))^d}, $$
\item [(ii)] for $  r_i > l+\frac{1}{2},$
$$
\|\phi-\phi^h_{p_i}\|_{(H^l(e_k))^d}\leq
C_A\frac{h_i^{\min\{r_i,p_i+1\}-l-{\frac{1}{2}}}}{p_i^{r_i-l-{\frac{1}{2}}}}
\|\phi\|_{(H^{r_i}(K_i))^d}.
$$
\end{itemize}
 \end{lemma}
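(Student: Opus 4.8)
The plan is to prove this by the standard strategy of $hp$-approximation theory: reduce everything to a reference element via affine scaling, construct a polynomial projection on that reference element carrying explicit dependence on the degree, and then scale back. Since the vector case $d=2$ follows componentwise from the scalar case, I may assume $d=1$ without loss. First I would fix the reference element $\hat K$ (the unit triangle if $Z_{p_i}=P_{p_i}$, the unit square if $Z_{p_i}=Q_{p_i}$), let $F_i:\hat K\to K_i$ be the affine map with $K_i=F_i(\hat K)$, and set $\hat\phi=\phi\circ F_i$. Shape regularity of $\mathcal{T}_h$ gives two-sided scaling bounds of the form $\|\hat\phi\|_{H^m(\hat K)}\le C\,h_i^{m-1}\|\phi\|_{H^m(K_i)}$ together with the reverse inequality, with constants depending only on $m$ and the regularity constant. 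Thus the task reduces to constructing on $\hat K$ a polynomial $\hat\phi_{p}\in Z_{p}(\hat K)$ with a sharp $p$-dependent bound on $\|\hat\phi-\hat\phi_{p}\|_{H^l(\hat K)}$.

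The core of the argument is this reference-element estimate. On the square I would expand $\hat\phi$ in a tensor-product Legendre series and take $\hat\phi_{p}$ to be its truncation to degree $p$ in each variable; on the triangle I would pass to collapsed (Duffy) coordinates, expand in the warped Jacobi (Koornwinder--Dubiner) basis, and truncate. The decay of the tail of such an orthogonal expansion, measured in $H^l$, is controlled by $\|\hat\phi\|_{H^{r}(\hat K)}$ and produces exactly the factor $p^{-(r_i-l)}$; this is the quantitative $hp$-estimate of Babu\v{s}ka--Suri type. Combining this with the scaling inequalities and collecting powers of $h_i$ yields part (i). The quantity $\min\{r_i,p_i+1\}$ arises because $Z_{p_i}$ can reproduce polynomials only up to degree $p_i$, so the $h$-rate saturates at $p_i+1-l$ once $r_i>p_i+1$, while the smooth regime $r_i\le p_i+1$ gives the full rate $r_i-l$.

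For part (ii) I would pass from the volume bound to the edge bound by a trace argument. The cleanest route is a scaled multiplicative trace inequality on $\hat K$, namely $\|\hat v\|_{H^l(\hat e)}^2\le C\,\|\hat v\|_{H^l(\hat K)}\,\|\hat v\|_{H^{l+1}(\hat K)}$ applied to $\hat v=\hat\phi-\hat\phi_{p}$; this is precisely where the hypothesis $r_i>l+\tfrac12$ is needed, since it guarantees that the traces are well defined and that one extra derivative is available. Inserting the reference-element bounds of part (i) at levels $l$ and $l+1$, taking the geometric mean, and rescaling from $\hat e$ to $e_k\subset\partial K_i$ produces the stated exponents: the geometric mean of the $h$-rates $\min\{r_i,p_i+1\}-l$ and $\min\{r_i,p_i+1\}-l-1$ gives $\min\{r_i,p_i+1\}-l-\tfrac12$, and likewise the $p$-rates combine to $r_i-l-\tfrac12$.

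The step I expect to be the main obstacle is the sharp reference-element estimate, and specifically securing the optimal power of $p$ rather than the easy power of $h$. The $h$-dependence follows from routine affine scaling, but the $p^{-(r_i-l)}$ decay requires the delicate analysis of truncated Jacobi and Legendre expansions in weighted Sobolev norms on the triangle --- in particular, controlling the singular weights introduced by the Duffy transform and tracking how each differentiation of the Koornwinder--Dubiner polynomials shifts the Jacobi parameters. This is exactly the technical content developed in \cite{BS}, which is why the statement is quoted here without reproducing the full proof.
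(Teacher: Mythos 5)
The paper contains no proof of this lemma at all---it is stated verbatim with a pointer to Babu\v{s}ka and Suri \cite{BS}---so your reconstruction can only be measured against that source. Your overall architecture (componentwise reduction to $d=1$, affine scaling to a reference element, a degree-$p$ approximation result there, rescaling, and a trace argument for the edge bound) is indeed the standard route, and your explanation of the saturation exponent $\min\{r_i,p_i+1\}$ is correct. However, two specific steps in your sketch would fail as written. First, taking $\hat\phi_{p}$ to be the plain truncation of the tensor Legendre (or Koornwinder--Dubiner) expansion does not deliver $p^{-(r_i-l)}$ in $H^l(\hat K)$ for $l\ge 1$: the $L^2$-orthogonal truncation is suboptimal in higher norms---already in $H^1$ it loses a factor of order $p^{1/2}$ (a classical observation of Canuto and Quarteroni)---so a single truncated series cannot yield the \emph{simultaneous} optimal estimates for all $0\le l\le r_i$ that the lemma asserts for one sequence $\phi^h_{p_i}$. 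The construction in \cite{BS} is genuinely more delicate: it builds the approximant so as to be optimal in the extreme norms and then obtains the intermediate cases by interpolation-space ($K$-functional) arguments, which is precisely what secures optimality in every $H^l$ at once.

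Second, your passage to the edge via the multiplicative trace inequality $\|\hat v\|_{H^l(\hat e)}^2\le C\,\|\hat v\|_{H^l(\hat K)}\,\|\hat v\|_{H^{l+1}(\hat K)}$ needs the volume estimate at level $l+1$, hence implicitly $r_i\ge l+1$. The lemma only assumes $r_i>l+\tfrac12$, and in the regime $l+\tfrac12<r_i<l+1$ your geometric-mean step has no admissible input at level $l+1$, since part (i) requires $l\le r_i$. Covering this range requires the reference-element estimate in fractional norms $H^{s}(\hat K)$ with $s=l+\tfrac12+\epsilon$ (again obtained by interpolation between integer levels) combined with the continuous trace embedding of $H^{s}(\hat K)$ into $H^l(\hat e)$ for $s>l+\tfrac12$; this is also where the half powers of $h_i$ and $p_i$ in part (ii) genuinely come from. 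With these two repairs---replacing bare truncation by a simultaneous-approximation construction plus interpolation, and treating the fractional regime in the trace step---your outline matches the argument of the cited source.
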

 For any $\phi \in {\bf W},$ we define ${\bf I}_h \phi \in {\bf W}_h$ by
 $$
{\bf I}_h \phi|_{K_i} = \phi^{h}_{p_i},\;\;\; {\mbox{ for }} \; K_i \in
{\mathcal{T}}_h.
$$
We observe that ${\bf I}_h$ satisfies the local approximation properties  given
in Lemma \ref{Ih-proj}. In a similar manner, we can also define
$I_h \psi,$ for $\psi \in V.$

%\noindent
%We recall the following trace inequality on the
%finite element spaces. For a proof, we refer to \cite{rwg}.
%\begin{lemma}({\bf Trace inequality})
%Let $ v_h \in (Z_{p_i}(K_i))^d,~ d=1,~ 2$. Then there exist a constant $C_T> 0$ such that
%\begin{equation}
%\|\nabla^l v_h\|_{(L^2(e_k))^d}\leq C_T \frac{p_i}{ h_i^{1/2}}\|\nabla ^l v_h\|_{(L^2(K_i))^d},~~ l=0, %1.\label{trace1}
%\end{equation}
%\label{trace}
%\end{lemma}
\begin{lemma}({\bf $L^2$-projection $\Pi_h$}).\label{L2-proj}
Let $\boldsymbol\psi\in {\bf H}^{r_i+1}(K_i)$ and
$ \boldsymbol\psi_h=\Pi_h\boldsymbol\psi \in {\bf Z}_{p_i}(K_i) $ be the
$L^2$-projection of
 $\boldsymbol\psi$ onto ${\bf Z}_{p_i}(K_i)$. Then the following
approximation property holds:
$$ \| \boldsymbol\psi- \boldsymbol\psi_h\|_{(L^2(K_i))^2}
+\frac {h_i^{\frac{1}{2}}}{p_i}\| \boldsymbol\psi-
\boldsymbol\psi_h\|_{(L^2(\partial K_i))^2}
\leq C\frac{h_i^{\min(r_i,~ p_i)+1}}{p_i^{r_i+1}}
\| \boldsymbol\psi\|_{{\bf H}^{r_i+1}(K_i)}.$$
\end{lemma}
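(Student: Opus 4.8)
The plan is to split the estimate into its interior and boundary parts and treat each with the tools already at hand. For the interior term I would exploit the defining optimality of the projection: because $\Pi_h\boldsymbol\psi$ is the best $(L^2(K_i))^2$-approximation of $\boldsymbol\psi$ in ${\bf Z}_{p_i}(K_i)$, one has
$$
\|\boldsymbol\psi-\Pi_h\boldsymbol\psi\|_{(L^2(K_i))^2}\le\|\boldsymbol\psi-\boldsymbol\psi^h_{p_i}\|_{(L^2(K_i))^2},
$$
where $\boldsymbol\psi^h_{p_i}$ is the approximant furnished by Lemma \ref{Ih-proj}. Applying Lemma \ref{Ih-proj}(i) with $l=0$ and regularity index $r_i+1$, and using the identity $\min\{r_i+1,p_i+1\}=\min\{r_i,p_i\}+1$, delivers the interior bound immediately.

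For the boundary term I would first write $\boldsymbol\psi-\Pi_h\boldsymbol\psi=(\boldsymbol\psi-\boldsymbol\psi^h_{p_i})+(\boldsymbol\psi^h_{p_i}-\Pi_h\boldsymbol\psi)$. The first summand lies in $({H}^{r_i+1}(K_i))^2$, so its trace on $\partial K_i$ is controlled directly by Lemma \ref{Ih-proj}(ii) (again with $l=0$ and index $r_i+1$); after multiplying by $h_i^{1/2}/p_i$ and using $p_i\ge1$ the resulting factor $h_i^{\min\{r_i,p_i\}+1}/p_i^{r_i+3/2}$ falls within the claimed right-hand side. The second summand $\boldsymbol\psi^h_{p_i}-\Pi_h\boldsymbol\psi$ is a polynomial in ${\bf Z}_{p_i}(K_i)$, so I would invoke an $hp$ inverse trace inequality of the form $\|\bw_h\|_{(L^2(\partial K_i))^2}\le C\,(p_i/h_i^{1/2})\,\|\bw_h\|_{(L^2(K_i))^2}$. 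Its interior norm is bounded, via the triangle inequality and the best-approximation property used above, by
$$
\|\boldsymbol\psi^h_{p_i}-\boldsymbol\psi\|_{(L^2(K_i))^2}+\|\boldsymbol\psi-\Pi_h\boldsymbol\psi\|_{(L^2(K_i))^2}\le 2\,\|\boldsymbol\psi-\boldsymbol\psi^h_{p_i}\|_{(L^2(K_i))^2},
$$
so the factor $p_i/h_i^{1/2}$ is exactly cancelled by the prefactor $h_i^{1/2}/p_i$, and the interior estimate from the previous paragraph closes the argument.

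The step I expect to be the crux is not the triangle-inequality bookkeeping but establishing the polynomial inverse trace inequality with the precise powers $p_i$ and $h_i^{-1/2}$. This I would obtain by mapping $K_i$ to a fixed reference element $\hat K$, on which a degree-$p_i$ polynomial satisfies the (degree-dependent) trace bound $\|\hat\bw_h\|_{L^2(\partial\hat K)}\le C\,p_i\,\|\hat\bw_h\|_{L^2(\hat K)}$, and then scaling back: in spatial dimension two the surface and volume measures contribute the factors $h_i^{1/2}$ and $h_i$ respectively, yielding the net $h_i^{-1/2}$. The shape-regularity and bounded-local-variation hypotheses on $\mathcal{T}_h$ guarantee that the constant in the reference-to-physical transformation is uniform, so that the final constant $C$ is independent of $h_i$ and $p_i$, as required.
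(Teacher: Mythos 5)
Your proposal is correct, but there is nothing in the paper to compare it against: the paper simply states this lemma without proof, treating it as a standard $hp$-approximation result for the $L^2$-projection (of the kind found in the literature it builds on, e.g.\ \cite{ccps}, \cite{PY-2011}). Your argument supplies a genuine proof, and it checks out in the details: the interior bound follows from the best-approximation property of $\Pi_h$ together with Lemma \ref{Ih-proj}(i) at $l=0$ with regularity index $r_i+1$, using $\min\{r_i+1,p_i+1\}=\min\{r_i,p_i\}+1$; for the boundary piece, Lemma \ref{Ih-proj}(ii) applies to $\boldsymbol\psi-\boldsymbol\psi^h_{p_i}$ since $r_i+1>\tfrac12$, and the surplus factor $p_i^{-1/2}$ you pick up there is harmlessly absorbed because $p_i\geq 1$. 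The one ingredient not available inside the paper is the $hp$-explicit polynomial inverse trace inequality $\|\bw_h\|_{(L^2(\partial K_i))^2}\leq C\,(p_i/h_i^{1/2})\,\|\bw_h\|_{(L^2(K_i))^2}$, and you are right that this is the crux: the reference-element version with constant growing like $p_i$ is a known but nontrivial fact (for simplices one even has the sharp form $\|\hat{\bw}_h\|_{L^2(\partial\hat K)}^2\leq C\,(p_i+1)(p_i+2)\,\|\hat{\bw}_h\|_{L^2(\hat K)}^2$, due to Warburton and Hesthaven), so in a written version you should cite it rather than only assert it. Your scaling back to the physical element is correct in dimension two ($h_i^{1/2}$ for the boundary measure against $h_i$ for the area, net $h_i^{-1/2}$, uniform constants by shape regularity), and the factor $p_i/h_i^{1/2}$ indeed cancels exactly against the prefactor $h_i^{1/2}/p_i$, closing the estimate. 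Two cosmetic points: Lemma \ref{Ih-proj}(ii) is stated edgewise, so you should sum over the (boundedly many) edges of $\partial K_i$ to get the bound on the whole of $\partial K_i$; and the bound $\|\boldsymbol\psi^h_{p_i}-\Pi_h\boldsymbol\psi\|_{(L^2(K_i))^2}\leq 2\|\boldsymbol\psi-\boldsymbol\psi^h_{p_i}\|_{(L^2(K_i))^2}$ via best approximation is exactly the right bookkeeping.
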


 \section { Discontinuous Galerkin Method}

In order to formulate DG methods for hyperbolic integro-differential
equations (\ref{1})-(\ref{4}), we now introduce the gradient and flux
variables as
\begin{eqnarray*}
 \bq=\nabla u,\hspace{.5cm}{\boldsymbol \sigma}=A\bq+\int_0^t B(t,s)\bq \,ds,
\end{eqnarray*}
and then rewrite (\ref{1}) as a system of equations:
\begin{eqnarray}
\bq&=&\nabla u \;\;\;\;\;\;\mbox{in}~\Omega\label{21},\\
{\boldsymbol \sigma}&=& A\bq+\int_0^t B(t,s)\bq \,ds \;\;\;\mbox{in}~
\Omega\label{22},\\
u_{tt}-\nabla\cdot{\boldsymbol \sigma}&=& f \;\;\;\;\;\;
\mbox{in}~\Omega\label{23}.
\end{eqnarray}
Then, the DG formulation for (\ref{21})-(\ref{23}) is to seek an approximate
solution $(u_h,{\bf q_h},{\boldsymbol \sigma}_h): (0,T]\mapsto Z_p(K)\times {\bf Z}_p(K)\times {\bf Z}_p(K)$   satisfying  for all $K\in\mathcal{T}_h$, the following system of equations for all $(v_h,\bw_h,{\boldsymbol \tau}_h) \in Z_p(K)\times {\bf Z}_p(K)\times {\bf Z}_p(K)$:
\begin{eqnarray}
\int_K{\bq_h}\cdot\bw_hdx+\int_K u_h \nabla\cdot\bw_hdx-\int_{\partial K}\hat{u}\bw_h\cdot \boldsymbol\nu_K \,ds=0,\label{ldg1}\\
\int_K A\bq_h\cdot\bt_h\,dx -\int_K{\bs}_h\cdot\bt_h\,dx +\int_0^t \int_K B(t,s)\bq_h(s)\cdot\bt_h\,dx\,ds=0,\label{ldg2}\\
\int_K u_{htt} v_h \,dx+\int_K {\boldsymbol \sigma}_h\cdot\nabla v_h \,dx-\int_{\partial K}\hat{{\boldsymbol \sigma}}\cdot\boldsymbol\nu_K v_h~\,ds=\int_K{ f} v_h \,dx.\label{ldg3}
\end{eqnarray}
Here, the numerical fluxes $\hat{u}$ and $\hat{\bs}$  are defined on
$ e_k \in \Gamma_I,$ see \cite{ccps},  as:
\begin{eqnarray}
\hat{u}(u_h,\bs_h)&=&\{\!\!\{u_h\}\!\!\}+C_{12}\cdot[\![u_h]\!]
-C_{22}[\![\bs_h]\!],\label{f1}\\
\hat{\bs}(u_h,{\boldsymbol \sigma}_h)&=&\{\!\!\{\bs_h\}\!\!\}
-C_{11}[\![u_h]\!]-C_{12}[\![\bs_h]\!],\label{f2}
\end{eqnarray}
and for  $e_k\in \Gamma_\partial$, i.e., $e_k=\partial K\cap\partial\Omega$ for
some $K\in \mathcal T_h$, then the numerical fluxes are denoted by
\begin{eqnarray}
 \hat u&=&0\\
 \hat\bs&=&{\bs_h}_{|_K}-C_{11} {u_h}_{|_K}{\boldsymbol\nu}{ _K},
\end{eqnarray}
where the parameters $C_{11}$, $C_{12}\in {\rm I{\!}R}^2 $ and $C_{22}$ are
single valued and are to be chosen appropriately. It is observed that
the numerical fluxes are conservative and consistent (cf. \cite{abcm}).

To complete the  DG formulation, sum (\ref{ldg1})-(\ref{ldg3}) over all elements $K\in \mathcal{T}_h$ and
apply  the conservative property  and the definition of the numerical
fluxes to arrive at the following system of equations for all
$(v_h,\bw_h,{\boldsymbol\tau}_h)\in V_h\times{\bW}_h\times{\bW}_h$:
\begin{eqnarray*}
&&\hspace{-.8cm}\int_\Omega{\bq_h}\cdot\bw_hdx+\sum_{i=1}^{N_h}\int_{K_i}\!\! u_h \nabla\cdot\bw_hdx-\!\!\int_{\Gamma_I}\!\!(\{\!\!\{u_h\}\!\!\}+C_{12}\cdot[\![u_h]\!]-C_{22}[\![\bs_h]\!])[\![\bw_h]\!]\,ds\!=\!0,\label{ldg21}\hspace{0cm}\\
&&\hspace{-.8cm}\int_\Omega A\bq_h.{\boldsymbol \tau}_h\,dx -\int_\Omega{\bs}_h.{\boldsymbol \tau}_h\,dx+\int_0^t \int_\Omega B(t,s)\bq_h(s)\cdot
{\boldsymbol \tau}_h\,dx\,ds=0,\label{ldg22}\\
&&\hspace{-.72cm}\label{ldg23}\int_{\Omega}u_{htt} v_h \,dx+\sum_{i=1}^{N_h}\int_{K_i} {\boldsymbol \sigma}_h\cdot\nabla v_h \,dx-\int_{\Gamma}(\{\!\!\{\boldsymbol \sigma_h\}\!\!\}-C_{11}[\![u_h]\!]\!-C_{12}[\![\bs_h]\!])[\![v_h]\!]\,ds\\&&\hspace{8cm}=\int_\Omega f v_h \,dx.\nonumber
\end{eqnarray*}
Note that the LDG method is obtained, when $C_{22}=0$, that is, when the numerical flux $\hat u$ does not
depend on $\bs_h.$

To rewrite the above system in a compact form, we define the following bilinear and linear forms:

\noindent
$ \A:{\bW}\times{\bW}\rightarrow\R $ as
\begin{eqnarray*}
\A({\bf p},\bw)=\int_\Omega {\bf p}\cdot\bw \,dx,
\end{eqnarray*}
$\A_1:V\times{\bW}\rightarrow \R$ as
\begin{eqnarray*}
\A_1(v,{\bf p})&=&\sum_{i=1}^{N_h}\int_{K_i} {\bf p}\cdot\nabla v \,dx-\int_{\Gamma}(\{\!\!\{{\bf p}\}\!\!\}-C_{12}[\![{\bf p}]\!])[\![v]\!]\,ds,\\
                &=&-\sum_{i=1}^{N_h}\int_{K_i} v \nabla\cdot{\bf p}+\int_{\Gamma_I}(\{\!\!\{v\}\!\!\}+C_{12}\cdot[\![v]\!])[\![{\bf p}]\!]\,ds,
\end{eqnarray*}
$ \A_2:{\bW}\times{\bW}\rightarrow\R $ as
$$ \A_2({\bf p},\bw)=\int_\Omega  A(x) {\bf p}\cdot\bw~\,dx,$$
$\B:{\bW}\times{\bW}\rightarrow \R$ as
$$ \B(t,s;{\bf p}(s),\bw)=\int_\Omega B(t,s){\bf p}(s)\cdot\bw~\,dx,$$
$J_1:{\bW}\times{\bW}\rightarrow \R $ as
$$J_1({\bf p},\bw)=\int_{\Gamma_I}C_{22}[\![{\bf p}]\!][\![\bw]\!]ds,$$
%$\A_2:{\bW}\times{\bW}\rightarrow \R $ as
and
$ J: V\times V\rightarrow \R$ as
$$J(\phi,v)=\int_{\Gamma}C_{11}[\![\phi]\!][\![v]\!] ds.$$
% and the linear form  $L:V\rightarrow \R$ as
%$$
%L(v)=\int_{\Omega} fv \,dx.
%$$
Hence, the DG formulation for the problem (\ref{21})-(\ref{23})
in compact form is stated as: find
$(u_h,\bq_h,{\boldsymbol \sigma}_h):(0,T]\rightarrow
V_h\times{\bW}_h\times{\bW}_h$ such that
\begin{eqnarray}
\A(\bq_h,\bw_h)-\A_1(u_h,\bw_h)+J_1(\bs_h, \bw_h)=0,\;\forall \bw_h \in {\bW}_h,\label{ldg31}\\
\; \A_2(\bq_h,{\boldsymbol \tau}_h)-\A({\boldsymbol \sigma}_h,{\boldsymbol \tau}_h)+
\int_0^t \B(t,s;\bq_h(s),{\boldsymbol \tau}_h)\,ds=0,\;\forall {\bt}_h\in {\bW}_h,  \label{ldg32}\\
(u_{htt},v_h)+\A_1(v_h,{\boldsymbol \sigma}_h)+J(u_h,v_h)=(f, v_h),\;\forall v_h\in V_h.\label{ldg33}
\end{eqnarray}

Following \cite{PY-2011}, we are now ready to specify the stabilization parameters. We define
 the set $\left<K,K'\right>$ by
\begin{equation}
 \left<K,K'\right>=\left\{\begin{array}{l l}\emptyset~~~~~~~~~~~~~~~~~~~~~~~~~~~~~~~~~~ \mbox {if  meas}(\partial K\cap\partial K')=0,\\
                          \mbox {interior of}~ \partial K\cap\partial K'~~~~~~~~~~~~~~~~~~~~~~~~~~ \mbox{otherwise}.
                         \end{array}\right.
\end{equation}
Assume that the stabilization parameters $C_{11}$ and $C_{22}$
in the definition of  numerical fluxes in (\ref{f1}) and (\ref{f2}) are stated, respectively, as
\begin{eqnarray}\label{c11}
 C_{11}({\boldsymbol x})=\left\{\begin{array}{l l}
\zeta\min\{\frac{h_i^\alpha}{ p_i^{2\alpha}},
\frac{h_j^\alpha}{ p_j^{2\alpha}}\}&\mbox {if} ~{\boldsymbol x}
\in \left< K_i,K_j\right>,\\
 \zeta \frac{h_i^\alpha}{ p_i^{2\alpha}};&\mbox {if} ~{\boldsymbol x} \in \partial K_i\cap\partial\Omega,\end{array}\right.
 \end{eqnarray}
 and
 \begin{eqnarray}\label{c22}
 C_{22}({\boldsymbol x})=\left\{\begin{array}{l l}\kappa\min\{\frac{h_i^\beta}{ p_i^{2\beta}},\frac{h_j^\beta}{ p_j^{2\beta}}\}&\mbox {if} ~{\boldsymbol x} \in \left< K_i,K_j\right>,\\
 \kappa \frac{h_i^\beta}{ p_i^{2\beta}}&\mbox {if} ~{\boldsymbol x} \in \partial K_i\cap\partial\Omega,\end{array}\right.
 \end{eqnarray}
where  $\zeta>0$, $\kappa\ge 0$, $-1\le\alpha\le0\le\beta\le 1$ are independent
of mesh size and $|C_{12}|$ is of order one. Our main results will be written
in terms of the parameters $\mu^*$ and $\mu_*,$
$$\mu^*=\max\{-\alpha,\hat\beta\},~~\mu_*=\min\{-\alpha,\hat\beta\},$$
where $\hat\beta=1,$ if $\kappa=0 $ and $\hat\beta=\beta,$ otherwise.

For each edge, we define
\begin{equation}
 \Lambda(\boldsymbol x):=\left\{\begin{array}{l l}
\min\{\frac{h_i}{p_i^2},\frac{h_{j}}{p_{j}^2}\}
& \mbox{if}~\boldsymbol x\in\left<K_i,K_j\right>,\\
~\frac{h_i}{p_i^2}& \mbox{if}
~\boldsymbol x\in\partial K_i\cap\partial\Omega,\end{array}\right.
\end{equation}
and then, we set
\begin{equation}\label{chi}
 \chi(\boldsymbol x):=\left\{\begin{array}{l l}
\Lambda(\boldsymbol x)&\hspace{3cm}\mbox{if}~\kappa=0,\\
 C_{22}(\boldsymbol x)&\hspace{3cm}\mbox{otherwise}.\end{array}\right.
\end{equation}

Note that numerical fluxes $\hat{u}$ and $\hat{\boldsymbol \sigma}$ are consistent and hence,
we  arrive at  the following system of equations for all $(v_h,{\boldsymbol
\tau}_h,\bw_h)\in V_h\times{\bW}_h\times{\bW}_h$,
\begin{eqnarray}
&& \A(\bq-\bq_h,\bw_h)-\A_1(u-u_h,\bw_h)+J_1(\bs-\bs_h, \bw_h)=0,\label{er1}\\
&& \A_2(\bq-\bq_h,{\boldsymbol  \tau}_h)-\A(\boldsymbol{\sigma- \sigma}_h,{\boldsymbol \tau}_h)+
\int_0^t \B(t,s;(\bq-\bq_h)(s),{\boldsymbol \tau}_h)\,ds=0,\label{er2}\\
&&(u_{tt-}u_{htt},v_h)+ \A_1(v_h,\boldsymbol{\sigma-\sigma}_h)+J(u-u_h,v_h)=0.\label{er3}
\end{eqnarray}
Below, we state  two theorems  on convergence  of the semidiscrete scheme, whose proof
can be found in the end of  Section 5.
\begin{theorem}\label{thm:main}
Let $(u,\bq,\bs)$ be the solution of $(\ref{21})$-$(\ref{23})$ satisfying
$u\in L^\infty(H^{r+2}(\mathcal{T}_h))$ with $u_{tt}\in L^1(H^{r+2}(\mathcal{T}_h))$
for $r\geq 0$. Further, let $(u_h,\bq_h,\bs_h)\in V_h\times{\bW}_h\times  {\bW}_h$ be the
solution of $(\ref{ldg31})$-$(\ref{ldg33})$. If $u_h(0)=\Pi_hu_0$, $u_{ht}(0)=\Pi_hu_1$,
$q_h(0)={\bf I}_h\nabla u_0$ and $\bs_h(0)={\bf I}_h(A\nabla u_0)$,  then the following estimates hold:
\begin{eqnarray}\label{eu2}
\|u_t-u_{ht}\|_{L^\infty(L^2(\Omega))}
&\leq & C \frac{h^{P+D}}{p^{\sQ+\sZ}}\Big(\|u_0\|_{H^{r+2}(\mathcal{T}_h)} +\|u_1\|_{H^{r+1}(\mathcal{T}_h)} \\
&&+\sum_{j=0}^{2}\left\|\frac{\partial^{j}u }{\partial t^j}\right\|_{L^1(H^{r+2}(\mathcal{T}_h))}\Big),\nonumber
%\hspace{-.5cm}
\end{eqnarray}
and
\begin{eqnarray}\label{eq-es}
\|{\bf q-q}_h\|_{L^\infty(L^2(\Omega))}&+&\|\bs-\bs_h\|_{L^\infty(L^2(\Omega))}
\leq  C \frac{h^{P}}{p^{R}}\Big(\|u_0\|_{H^{r+2}(\mathcal{T}_h)} +\|u_1\|_{H^{r+2}(\mathcal{T}_h)} \nonumber\\
&+&\sum_{j=0}^{2}\left\|\frac{\partial^{j}u }{\partial t^j}\right\|_{L^1(H^{r+2}(\mathcal{T}_h))} \Big),
\end{eqnarray}
where $P=\min\{r+\frac{1}{2}(1+\mu_*),p+\frac{1}{2}(1-\mu^*)\},~D=\frac{1}{2}(1+\mu_*)$,
$\sQ=r+\min\{\mu_*,1-\mu^*\}$ and $\sZ=\min\{\frac{1}{2},\mu_*\}$.
\end{theorem}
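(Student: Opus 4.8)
The plan is to introduce the expanded mixed type Ritz--Volterra projection $(\tilde{u},\tilde{\bq},\tilde{\bs})$ of $(u,\bq,\bs)$ constructed in Section~4 and to split each error through it as
\begin{equation*}
u-u_h=\rou+\tu,\qquad \bq-\bq_h=\roq+\tq,\qquad \bs-\bs_h=\ros+\ts,
\end{equation*}
where $\rou=u-\tilde{u}$, $\roq=\bq-\tilde{\bq}$, $\ros=\bs-\tilde{\bs}$ are the projection parts and $\tu=\tilde{u}-u_h$, $\tq=\tilde{\bq}-\bq_h$, $\ts=\tilde{\bs}-\bs_h$ the discrete parts. The projection components $\rou,\roq,\ros$ are bounded directly by the approximation estimates of Section~4; it is there, and not in the energy step below, that the rates $h^{P}/p^{\sQ}$ and $h^{P+D}/p^{\sQ+\sZ}$ and their dependence on the stabilization exponents through $\mu^\ast$ and $\mu_\ast$ are produced. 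It therefore remains to estimate $\tu,\tq,\ts$ by an energy argument.

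First I would subtract the defining relations of the Ritz--Volterra projection from the error equations (\ref{er1})--(\ref{er3}). By construction the contributions of $\rou,\roq,\ros$ to the first two equations cancel, and the only consistency residual is the second time derivative $(\rou)_{tt}$ in the third equation, since the mixed projection does not commute with $\partial_t^2$. This leaves, for all $(v_h,\bw_h,\bt_h)\in V_h\times\bW_h\times\bW_h$,
\begin{align*}
&\A(\tq,\bw_h)-\A_1(\tu,\bw_h)+J_1(\ts,\bw_h)=0,\\
&\A_2(\tq,\bt_h)-\A(\ts,\bt_h)+\int_0^t\B(t,s;\tq(s),\bt_h)\,ds=0,\\
&((\tu)_{tt},v_h)+\A_1(v_h,\ts)+J(\tu,v_h)=-((\rou)_{tt},v_h).
\end{align*}

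To build an energy identity I would differentiate the first relation in time and test it with $\bw_h=\ts$, test the second with $\bt_h=(\tq)_t$, and test the third with $v_h=(\tu)_t$. Using the symmetry of $\A$, $\A_2$, $J$, $J_1$ and the time independence of the coefficients, the $\A_1$ coupling produced by the third relation is rewritten, via the time-differentiated first relation and the second relation, as $\tfrac12\frac{d}{dt}\A_2(\tq,\tq)$ plus the memory contribution, so that all first-order cross terms combine into the derivative of a single energy, giving
\begin{equation*}
\tfrac12\frac{d}{dt}\Big(\|(\tu)_t\|^2+\A_2(\tq,\tq)+J(\tu,\tu)+J_1(\ts,\ts)\Big)+\int_0^t\B(t,s;\tq(s),(\tq)_t)\,ds=-((\rou)_{tt},(\tu)_t).
\end{equation*}
The energy $E(t)=\|(\tu)_t\|^2+\A_2(\tq,\tq)+J(\tu,\tu)+J_1(\ts,\ts)$ is non-negative because $A$ is positive definite and $C_{11}>0$, $C_{22}\ge0$. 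Integrating over $(0,\tau)$, I would integrate the memory term by parts in time to move the derivative off $\tq$ (using that the memory integral vanishes at $t=0$ and differentiating the kernel by Leibniz), bound the resulting boundary term by Young's inequality so as to absorb a small multiple of $\A_2(\tq(\tau),\tq(\tau))\le E(\tau)$ into the left-hand side, and control $-((\rou)_{tt},(\tu)_t)$ by $\tfrac12\|(\rou)_{tt}\|^2+\tfrac12\|(\tu)_t\|^2$. A Gronwall argument then bounds $\sup_\tau E(\tau)$ by $E(0)$, by $\int_0^T\|(\rou)_{tt}\|^2\,dt$ and by $\int_0^T\|\tq\|^2\,dt$; the prescribed initial data make $E(0)$ itself bounded by projection errors at $t=0$, supplying the $\|u_0\|$ and $\|u_1\|$ terms. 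Finally $\ts$ in $L^2(\Omega)$ is recovered from the second relation by testing with $\bt_h=\ts$, which gives $\|\ts\|\le C\big(\|\tq\|+\int_0^t\|\tq(s)\|\,ds\big)$. Combining the bounds on $(\tu)_t$, $\tq$, $\ts$ with the Section~4 estimates for $(\rou)_t,\roq,\ros$ through the triangle inequality then yields (\ref{eu2}) and (\ref{eq-es}).

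I expect the main obstacle to lie in two coupled places. The first is the memory term: the Volterra convolution couples all past times, so the integration by parts in time must be arranged so that no derivative ultimately falls on $\tq$ and so that Gronwall closes without degrading the order in $h$ or $p$; the bound $M$ on $B$ and $\partial_tB$ is the essential tool. The second and more delicate issue is the sharpness of the $hp$-exponents: the splitting into $\mu^\ast$ and $\mu_\ast$ reflects a balance between the jump penalties $J$ and $J_1$ (driven by $C_{11}\sim h^\alpha/p^{2\alpha}$ and $C_{22}\sim h^\beta/p^{2\beta}$, summarised by $\chi$ and $\Lambda$) and the trace approximation losses in Lemma~\ref{Ih-proj}(ii), and the extra velocity factor $h^{D}/p^{\sZ}$ with $D=\tfrac12(1+\mu_\ast)$, $\sZ=\min\{\tfrac12,\mu_\ast\}$ must be extracted carefully inside the projection estimates rather than from the energy step.
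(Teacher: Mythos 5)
Your proposal reproduces the paper's proof in all structural respects: the same splitting through the extended mixed Ritz--Volterra projection of Section~4, the same error equations (\ref{xi1})--(\ref{xi3}) in which the first two relations have zero residual and only $(\eta_{u_{tt}},v_h)$ survives in the third, the same test functions (differentiate the first relation in $t$ and test with $\xs$, test the second with $\boldsymbol\xi_{\bq_t}$, the third with $\xi_{u_t}$), the same product-rule rewriting (\ref{integral-term}) that moves the time derivative off $\boldsymbol\xi_{\bq_t}$ in the Volterra term, the same recovery of $\|\xs\|$ by testing (\ref{xi2}) with $\bt_h=\xs$, and the same final triangle inequality against Theorem~\ref{thm:u-up}. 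Your closing remark that the $hp$-exponents $P,D,\sQ,\sZ$ come entirely from the projection estimates and not from the energy step is exactly how the paper proceeds.

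There is, however, one concrete deviation that prevents your argument, as written, from proving the theorem \emph{as stated}. You propose to control the residual by Young's inequality, $|((\rou)_{tt},(\tu)_t)|\leq \tfrac12\|(\rou)_{tt}\|^2+\tfrac12\|(\tu)_t\|^2$, and then run Gronwall on the squared energy; the resulting bound involves $\big(\int_0^T\|\eta_{u_{tt}}\|^2\,dt\big)^{1/2}$, i.e.\ an $L^2$-in-time norm of the projection error. But the theorem assumes only $u_{tt}\in L^1(H^{r+2}(\mathcal{T}_h))$ and its right-hand side carries $\sum_{j=0}^{2}\|\partial_t^j u\|_{L^1(H^{r+2}(\mathcal{T}_h))}$; under these hypotheses the $L^2(L^2)$-norm of $\eta_{u_{tt}}$ need not even be finite, so your route proves a genuinely weaker statement requiring $L^2$-in-time regularity. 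The paper avoids this by never squaring the residual: after integrating the energy identity it keeps the bound linear, $2\int_0^t\|\eta_{u_{tt}}\|\,\|\xi_{u_t}\|\,ds\leq 2\big(\int_0^t\|\eta_{u_{tt}}\|\,ds\big)\max_{0\le s\le t}\|(\xi_{u},\xi_{u_t},\xq,\xs)(s)\|$, evaluates the inequality at the time $t^\ast$ where the energy norm is maximal, cancels one factor of the energy norm, and only then applies the (linear) Gronwall lemma; the same device handles the memory boundary term $\|\ah\xq(t)\|\int_0^t\|\ah\xq(s)\|\,ds$ without the Young absorption you suggest. Replacing your Young-plus-squared-Gronwall step by this maximum-time argument repairs the proof and recovers the stated $L^1$-in-time dependence; everything else in your plan coincides with the paper.
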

\begin{theorem}\label{FE-2}Let $\Omega$ be a bounded convex polygon domain
in $\R^2$ and let $(u,\bq,\bs)$ be the solution of $(\ref{21})$-$(\ref{23})$ satisfying
$u\in L^\infty(H^{r+2}(\mathcal{T}_h))$ with $u_t\in L^1(H^{r+2}(\mathcal{T}_h))$ for $r\geq0$.
Further, let $(u_h,\bq_h,\bs_h)\in V_h\times{\bW}_h\times  {\bW}_h$ be the
solution of $(\ref{ldg31})$-$(\ref{ldg33})$ with $u_h(0)=\Pi_hu_0.$ Then the following estimate holds:
\begin{eqnarray}
\quad \|u-u_h\|_{L^{\infty}(L^2(\Omega))} &\leq & C \frac{h^{P+D}}{p^{\sQ+\sZ}}\Big(\|u_0\|_{H^{r+2}(\mathcal{T}_h)}
+\sum_{j=0}^{1}\left\|\frac{\partial^{j}u }{\partial t^j}\right\|_{L^1(H^{r+2}(\mathcal{T}_h))}\Big),
\label{eu3}
\end{eqnarray}
where $P,D,R$ and $S$ as in Theorem~$\ref{thm:main}$.
\end{theorem}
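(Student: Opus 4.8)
The plan is to combine the variant of Baker's nonstandard energy argument with the expanded mixed Ritz--Volterra projection of Section~4. First I split the errors through this projection $(\tilde u,\tilde\bq,\tilde\bs)$ as $u-u_h=\rou-\tu$, $\bq-\bq_h=\roq-\tq$ and $\bs-\bs_h=\ros-\ts$, where $\rou=u-\tilde u$, $\tu=u_h-\tilde u$, and analogously for the vector quantities. The projection components $\rou,\roq,\ros$ are bounded directly by the Section~4 estimates, which already deliver the optimal order $h^{P+D}/p^{R+S}$ for $\rou$ in $L^\infty(L^2)$; by the triangle inequality the theorem therefore reduces to proving the same bound for the discrete component $\tu$. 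Subtracting the projection relations from the error equations (\ref{er1})--(\ref{er3}), and using that the Ritz--Volterra projection is defined so that the elliptic and memory residuals of $\rou,\roq,\ros$ cancel, the system for $(\tu,\tq,\ts)$ keeps on its right-hand side only the second time-derivative residual $(\rou_{tt},\cdot)$, while retaining the discrete memory coupling $\int_0^t\B(t,s;\tq(s),\cdot)\,ds$ inherited from (\ref{er2}).

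Next I introduce Baker's time-integrated test function. For a fixed $t^\ast\in(0,T]$, set $\psi(t)=\int_t^{t^\ast}\tu(s)\,ds$ for $0\le t\le t^\ast$ and $\psi\equiv0$ for $t\ge t^\ast$, so that $\psi_t=-\tu$ on $(0,t^\ast)$ and $\psi(t^\ast)=0$. Since the scheme is a first-order system, $\psi$ must be accompanied by its discrete gradient and flux surrogates $\tp$ and $\tsi$, defined by feeding $\psi$ into the LDG relations (\ref{ldg31})--(\ref{ldg32}); these are precisely the auxiliary variables $\tp$ and $\tsi$. Testing the three error equations for $(\tu,\tq,\ts)$ with an appropriate combination of $\psi$, $\tp$ and $\tsi$ and integrating over $[0,t^\ast]$, the decisive step is the temporal integration by parts of $\int_0^{t^\ast}(\tu_{tt},\psi)\,dt$. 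Using $\psi_t=-\tu$ and $\psi(t^\ast)=0$ it produces $\tfrac12\|\tu(t^\ast)\|^2$, while the forms $\A,\A_1,\A_2,J,J_1$ reassemble into $-\tfrac12\tfrac{d}{dt}$ of a nonnegative DG energy quadratic whose time integral telescopes to its nonnegative value at $t=0$ (again because $\psi(t^\ast)=0$). The same integration by parts applied to the residual $\int_0^{t^\ast}(\rou_{tt},\psi)\,dt$ shifts one time derivative off $\rou$, leaving only $\rou_t$; this is exactly the mechanism that lowers the regularity requirement from $u_{tt}$ (needed in Theorem~\ref{thm:main}) to $u_t\in L^1(H^{r+2}(\mathcal{T}_h))$, and is why prescribing only $u_h(0)=\Pi_h u_0$ suffices.

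After these manipulations the left-hand side dominates $\|\tu(t^\ast)\|^2$ plus a coercive stabilization term, and the right-hand side collects the initial contributions, the residuals in $\rou_t$ paired against $\tu$ and $\psi$, and the memory contribution. The initial terms $\tfrac12\|\tu(0)\|^2$ and $(\tu_t(0),\psi(0))$ are controlled using $u_h(0)=\Pi_h u_0$ together with Lemma~\ref{L2-proj} and the Section~4 estimates, the velocity term being harmless because $\psi(0)=\int_0^{t^\ast}\tu\,ds$ carries a factor $\int_0^{t^\ast}\|\tu\|\,ds$ that is absorbed after taking the supremum over $t^\ast$. All remaining pairings are estimated by keeping $\psi,\tp,\tsi$ expressed in terms of $\int_t^{t^\ast}\tu$, so that Cauchy--Schwarz and Young's inequality return quantities of the form $\int_0^{t^\ast}\|\tu\|^2\,dt$ on the right.

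The step I expect to be the main obstacle is the memory term $\int_0^t\B(t,s;\tq(s),\cdot)\,ds$, which couples all earlier times and, after the temporal integration in Baker's argument, becomes a double time integral over $0\le s\le t\le t^\ast$. The danger is that a crude Cauchy--Schwarz bound pairs the gradient error $\tq$ --- controlled only at the gradient order $h^{P}/p^{R}$ through Theorem~\ref{thm:main} --- against the test quantities and would cap the displacement at that same suboptimal order, destroying the gain of the nonstandard formulation. Recovering the full order $h^{P+D}/p^{R+S}$ thus demands a finer, problem-specific treatment: interchanging the order of integration, using the smoothness and the $M$-boundedness of $B$ to transfer the $t$-dependence onto the kernel and to re-express the contribution in terms of the displacement-order quantity $\tu$ rather than its gradient, while carefully tracking the DG jump and stabilization terms. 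Once every contribution has been reduced to the form $\int_0^{t^\ast}\|\tu\|^2\,dt$, Gronwall's inequality closes the bound for $\|\tu(t^\ast)\|$; since $t^\ast\in(0,T]$ is arbitrary this yields the $L^\infty(L^2)$ estimate for $\tu$, and combining it with the $\rou$ bound through the triangle inequality gives (\ref{eu3}) with the stated exponents $P,D,R,S$.
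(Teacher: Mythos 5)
Your overall strategy is in spirit the paper's own: the same Ritz--Volterra splitting, a Baker-type nonstandard energy argument, and the shift of one time derivative off the projection residual (from $\rou_{tt}$ to $\rou_t$) is exactly the mechanism by which the paper reduces the regularity to $u_t\in L^1(H^{r+2}(\mathcal{T}_h))$. The paper implements the time-integration dually, though: instead of testing with $\psi(t)=\int_t^{t^\ast}\tu(s)\,ds$, it integrates the second and third error equations in time (with $\hat\phi(t)=\int_0^t\phi(s)\,ds$) and then tests with $(\hxs,\xq,\xu)$. Because the forms $\A,\A_1,\A_2,J,J_1$ are linear and time-independent, the hatted errors $(\hxu,\hxq,\hxs)$ automatically satisfy the integrated system, so no auxiliary surrogates $\tp,\tsi$ ever need to be constructed or analyzed --- a genuine simplification over your plan of "feeding $\psi$ into the LDG relations." Note also that the integrated momentum equation carries the boundary term $-(e_{ht}(0),v_h)$ (your $(\tu_t(0),\psi(0))$): the paper makes it vanish identically by taking $u_{ht}(0)=\Pi_h u_1$, whereas your bound $\|\tu_t(0)\|\int_0^{t^\ast}\|\tu\|\,ds$ would leave a $u_1$-dependent contribution that is absent from the right-hand side of (\ref{eu3}); you need the same orthogonality, not absorption.

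The genuine gap is in the memory term, which you correctly flag as the crux but then aim at the wrong target. Your plan to "re-express the contribution in terms of the displacement-order quantity $\tu$" and to reduce every pairing to $\int_0^{t^\ast}\|\tu\|^2\,dt$ cannot work: there is no discrete mechanism (no inverse estimate, no duality at this stage) converting gradient errors into displacement errors, and the double-time-integrated memory term pairs gradients with gradients. What actually closes the argument is that the energy identity controls $\|\xu(t)\|^2+\|A^\half\hxq(t)\|^2$ --- the \emph{time-integrated gradient} is itself part of the energy (this, not the jump stabilization, is the coercive term your sketch needs). The paper then integrates the triple integral $\int_0^t\int_0^s\int_0^\tau\B(\tau,\ta;\xq(\ta),\xq(s))\,d\ta\,d\tau\,ds$ by parts twice, once in the innermost variable and once in $s$, using $\xq=\frac{d}{dt}\hxq$ together with the $M$-boundedness of $B$ and its derivative, so that only $\hxq$ survives, with bounds of the form $M\{\|\hxq(t)\|\int_0^t\|\hxq(s)\|\,ds+\int_0^t\|\hxq(s)\|^2\,ds\}$; a maximum-over-$[0,t]$ argument and Gronwall applied to the combined quantity $\|\xu\|+\|A^\half\hxq\|$ then finish, with no appeal to Theorem~\ref{thm:main} for $\tq$ at all --- so the suboptimality you fear never enters. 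Since your proposal leaves this "finer, problem-specific treatment" explicitly unresolved and the reduction you do propose points at the wrong quantity, the argument is incomplete precisely at its decisive step.
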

\begin{table}[H]
\caption{}
\vspace{-0.4em}
\begin{center}
{Orders of convergence for $r\ge p$ and $p\ge1$.}
\begin{tabular}{|c c|c|c|}
\hline
& & & \\
$C_{22}$&$C_{11}$ & ~~~$\|\bq-\bq_h\|,~\|\bs-\bs_h\|$ &~~~~~$\|u-u_h \|$\\
& & & \\
\hline
& & & \\
0, $O(\frac{h}{p^2})$&$O(1)$   & $\frac{h^{p}}{p^r}$ & $\frac{h^{p+\frac{1}{2}}}{p^r}$ \\
& & & \\
~0, $O(\frac{h}{p^2})$&$O(\frac{p^2}{h})$ & $\frac{h^{p}}{p^{{r}}}$           & $\frac{h^{p+1}}{p^{{r+\frac{1}{2}}}}$		     \\
& & & \\
\hline
& & & \\
$O(1)$&$O(1)$   & $\frac{h^{p+\frac{1}{2}}}{p^r}$ & $\frac{h^{p+1}}{p^r}$ \\
& & & \\
$O(1)$&$O(\frac{p^2}{h})$& $\frac{h^{p}}{p^r}$ & $\frac{h^{p+\frac{1}{2}}}{p^r}$ \\
& & & \\
\hline
\end{tabular}

\label{OC}
\end{center}
\end{table}
As in \cite{PY-2011}, we can make similar observations based on the results of
the above Theorem~\ref{thm:main}.

\section{Extended Mixed Ritz-Volterra Projection and
Related Estimates}

In this section, we introduce an extended mixed Ritz-Volterra
projection for our subsequent use.

Define an extended mixed Ritz-Volterra projection as:
Find $(\tilde{u}_h,\tilde{\bf q}_h,\tilde{\boldsymbol
\sigma}_h):(0,T]\rightarrow V_h\times{\bW}_h\times{\bW}_h $ satisfying
\begin{eqnarray}
&& \A(\bq-{\bf \tilde{q}}_h,\bw_h)- \A_1(u-\tilde{u}_h,\bw_h)+J_1(\bs-\tilde{\bs}_h, \bw_h)=0,~~~\forall~\bw_h\in\bW_h,\label{p1}\\
&&\A_2(\bq-{\bf \tilde{q}}_h,{\bf \tau}_h)
-\A(\boldsymbol{\sigma-\tilde{\sigma}}_h,{\bt}_h)
+\int_0^t \B(t,s;(\bq-{\bf \tilde{q}}_h)(s),{\bt}_h)\,ds
=0,~~~\forall~\bt_h\in\bW_h,\label{p2}\hspace{-1cm}\\
&& \A_1(v_h,\boldsymbol{\sigma- \tilde{\sigma}}_h)
+J(u-\tilde{u}_h,v_h)=0,~~~\forall~v_h\in V_h.\label{p3}
\end{eqnarray}
For given $(u,\bq, \bs)$, it is easy to show the  existence of a unique  solution  $(\tilde{u}_h,\tilde{q}_h,\tilde{\boldsymbol \sigma}_h)$ to the problem (\ref{p1})-(\ref{p3}).

With $ \eu:= u-\tilde u_h,\;\; \eq:={\bf q-\tilde q}_h$, and $\es:=\bs-\tilde{\bs}_h,$  we state without proof
the error estimates, whose proofs after simple modifications can be found in \cite{PY-2011}.
% % % % % % % % % % % % % % % % % % % % % % % %
\begin{theorem} \label{thm:u-up}
Let  $(u,\bq,\bs)$ be the solution
of $(\ref{21})$-$(\ref{23})$.
%satisfying $u\in L^\infty(H^{r+2}(\Omega_h))$ with $u_t\in L^\infty(H^{r+2}(\Omega_h))$ for $r\geq0$.
Further, let $(\tilde u_h,\tilde\bq_h,\tilde
{\bs}_h)$ be the solution of $(\ref{p1})$-$(\ref{p3})$. Then, there exists a positive
constant $C$ independent of $h$ and $p$ such that for $l=0,1,2$
\begin{eqnarray*} \label{etau2}
\left|\left|\frac{\partial^l \eu}{\partial t^l}\right|\right|\leq  C\frac{h^{(P+D)}}{p^{(\sQ+\sZ)}}
\sum_{j=0}^{l} \left(\left\|\frac{\partial^j u}{\partial t^j}\right\|_{H^{r+2}
(\mathcal{T}_h)}+\int_0^t\left\|\frac{\partial^{j}u}{\partial t^j}(s)\right\|_{H^{r+2}
(\mathcal{T}_h)}\,ds\right),\nonumber
\end{eqnarray*}
and
\begin{eqnarray}
\left\|\frac{\partial^l \es}{\partial t^l}\right\|+
\left\|\frac{\partial^l \eq}{\partial t^l}\right\|
+\left(\int_{\Gamma_I} C_{22}\left[\!\left[\frac{\partial^l \es}{\partial t^l}\right]\!\right]^2\,ds\right)^\half
+\left(\int_\Gamma C_{11}\left[\!\left[\frac{\partial^l \eu}{\partial t^l}\right]\!\right]^2\,ds\right)^\half\nonumber\\
\leq C\frac{h^{P}}{p^{R}}\sum_{j=0}^{l}\left(\left\|\frac{\partial^j u}{\partial t^j}\right\|_{H^{r+2}
(\mathcal{T}_h)}+\int_0^t\left\|\frac{\partial^j u}{\partial t^j}(s)\right\|_{H^{r+2}
(\mathcal{T}_h)}\,ds\right),\hspace{-1cm}\label{tstq}
\end{eqnarray}
where $P=\min\{r+\frac{1}{2}(1+\mu_*),p+\frac{1}{2}(1-\mu^*)\},~D=\frac{1}{2}(1+\mu_*)$,
$\sQ=r+\min\{\mu_*,1-\mu^*\}$ and $\sZ=\min\{\frac{1}{2},\mu_*\}$.
\end{theorem}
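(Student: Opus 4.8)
The plan is to adapt the mixed Ritz--Volterra argument of \cite{PY-2011}, the only essential new ingredient being the memory term $\int_0^t\B(t,s;\cdot,\cdot)\,ds$, which I would absorb through a Gronwall inequality. First I split each projection error by inserting the $L^2$-projection $\Pi_h$ of Lemma~\ref{L2-proj} and the interpolant ${\bf I}_h$ of Lemma~\ref{Ih-proj}, writing $\eu=\tu+\xu$ with $\tu=u-\Pi_h u$ and $\xu=\Pi_h u-\tilde u_h$, and likewise $\eq=\tq+\xq$, $\es=\ts+\xs$. The interpolation parts $\tu,\tq,\ts$ are bounded directly in both the volume and edge norms by Lemmas~\ref{Ih-proj} and \ref{L2-proj}, so the task reduces to estimating the discrete parts $\xu,\xq,\xs$.

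Substituting the splitting into (\ref{p1})--(\ref{p3}) yields error equations for $(\xu,\xq,\xs)$ whose right-hand sides contain only the $\theta$-terms; the $L^2$-orthogonality of $\Pi_h$ annihilates the volume contributions, leaving edge terms controlled by Lemma~\ref{L2-proj}(ii). For the energy estimate (\ref{tstq}) at $l=0$, I would test the first error equation with $\bw_h=\xs$, the second with $\bt_h=\xq$, and the third with $v_h=\xu$, and add the three. By the consistency of the numerical fluxes the two $\A_1$-contributions cancel and the symmetric $\A$-terms drop out, leaving the coercive combination $\A_2(\xq,\xq)+J_1(\xs,\xs)+J(\xu,\xu)$, which is bounded below by $\alpha\|\xq\|^2+\int_{\Gamma_I}C_{22}[\![\xs]\!]^2\,ds+\int_{\Gamma}C_{11}[\![\xu]\!]^2\,ds$. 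The residual memory term $\int_0^t\B(t,s;\xq(s),\xq)\,ds$ is estimated by $|B|\le M$ together with Young's inequality, after which a Gronwall argument closes the bound for $\xq$ and the two jump seminorms; the norm of $\xs$ is then recovered separately by testing the second equation with $\bt_h=\xs$ and using the already-controlled $\xq$.

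For the sharper $L^2$-bound on $\eu$ (the first inequality, carrying the extra factor $h^D/p^S$ with $D=\frac12(1+\mu_*)$ and $S=\min\{\frac12,\mu_*\}$), I would run an Aubin--Nitsche duality argument: introduce the auxiliary elliptic problem with data $\xu$, use $H^2$-elliptic regularity on the convex domain $\Omega$, and pair its solution against the error equations so that the extra regularity furnishes the gained half-power in $h$ and the matching power in $p$, exactly as in the Volterra-free setting of \cite{PY-2011}; the memory term contributes one more Volterra integral that is again reabsorbed by Gronwall. Finally, the estimates for $\partial_t^l\eta$, $l=1,2$, follow by differentiating (\ref{p1})--(\ref{p3}) in time and proceeding inductively: here the Leibniz rule applied to $\int_0^t\B(t,s;\cdot,\cdot)\,ds$ produces the instantaneous term $\B(t,t;\cdot,\cdot)$ plus $\int_0^t\partial_t\B(t,s;\cdot,\cdot)\,ds$, while $\partial_t^j A$ generates additional lower-order terms, all of which are controlled by the $l-1$ estimates.

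The step I expect to be the main obstacle is precisely this time-differentiation of the memory term: each derivative feeds lower-order $\xi$-norms back into the right-hand side through $\B(t,t;\cdot,\cdot)$ and $\partial_t^j\B$, so the induction on $l$ must be organized so that the accumulated Volterra contributions remain summable and can be closed by a single Gronwall inequality uniformly for $l\le2$; keeping the duality argument compatible with these differentiated memory terms, so that the gained half-power is not lost, is the delicate point. Nevertheless, as asserted in the statement, this amounts only to a careful but routine modification of the analysis already carried out in \cite{PY-2011}.
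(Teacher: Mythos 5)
Your outline is exactly the argument the paper itself invokes: Theorem~\ref{thm:u-up} is stated without proof, with the authors deferring to the parabolic analysis of \cite{PY-2011} ``after simple modifications,'' and your plan --- splitting via $\Pi_h$ and ${\bf I}_h$, the energy identity from testing (\ref{p1})--(\ref{p3}) with $(\xs,\xq,\xu)$ so the $\A$- and $\A_1$-terms cancel, Gronwall for the Volterra term, elliptic duality on the convex domain for the extra $h^D/p^{\sZ}$ in the $\eu$-bound, and time differentiation with $\B(t,t;\cdot,\cdot)+\int_0^t\B_t$ for $l=1,2$ --- is precisely that modification. Two cosmetic quibbles only: the $\A_1$-cancellation is due to the skew structure of the mixed system rather than flux consistency, and since $A=A(x)$ is time-independent no $\partial_t^jA$ terms arise.
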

% % % % % % % % % % % % % % % % % % % % % %

\section{ A Priori Error Estimates}

In this section, we shall derive {\it a priori} error estimates for the semi-discrete DG scheme.
%we prove our main theorem, that is, Theorem \ref{thm:main}.
Using the extended mixed Ritz-Volterra projection, we rewrite
\begin{eqnarray*}
u-u_h:=(u-\tilde u_h)-(u_h-\tilde u_h)=:\eu-\xu,\\
          {\bf q- q}_h:=({\bf q-\tilde q}_h)-(\bq_h-{\bf\tilde q}_h)=:\eq-\xq,\\
          \bs-\bs_h:=(\bs-\tilde{\bs}_h)-(\bs_h-\tilde{\bs}_h)=:\es-\xs.
\end{eqnarray*}
\noindent
Since the estimates of $\eu$, $\eq$ and $\es$ are known from Theorem \ref{thm:u-up},
it is enough to obtain  estimates for $\xu$, $\xq$ and ${\boldsymbol\xi}_{\bs}.$
Now, from (\ref{p1})-(\ref{p3}) and (\ref{er1})-(\ref{er3}),  we arrive at
\begin{eqnarray}
&& \A(\xq, \bw_h)-\A_1(\xu,\bw_h)+J_1(\xs,\bw_h)=0~~~~~~~~~~~\forall~ \bw_h\in {\bW}_h, \label{xi1}\\
&& \A_2(\xq,{\bt}_h)-\A(\xs, {\bt}_h )+\int_0^t \B(t,s;
\xq(s),{\bt}_h)\,ds=0~~~~~~~~~~~\forall ~{\bt}_h\in {\bW}_h, \label{xi2}\\
&&( \xi_{u_{tt}}, v_h)+ \A_1(v_h,\xs)+J( \xu,v_h)=(\eta_{u_{tt}}, v_h)~~~\forall ~ v_h\in V_h. \label{xi3}
\hspace{-2cm}
\end{eqnarray}
Estimates of $\|\xu\|,~\|\xq\|$ and $\|\xs\|$ are given in the following lemma.
\begin{lemma} \label{5a}
There exists a constant C, independent of $h$ and $p,$ such that
\begin{eqnarray}
&&\|\xi_{u_t}\| + ||\xq ||+\|\xs\| \leq C \Big(\|\xi_{u_t}(0)\|\label{xi_u}+||\xq(0)||\qquad\qquad\\
&&\qquad  +J_1(\xs(0),\xs(0))^\half+J( \xu(0),\xi_{u}(0))^\half+\int_0^T||\eta_{u_{tt}}||\,ds\Big).\nonumber
\hspace{-2.5cm}
\end{eqnarray}
\end{lemma}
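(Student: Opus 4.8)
The plan is to run a standard energy argument on the error system (\ref{xi1})--(\ref{xi3}), arranged so that the troublesome $\A_1$ coupling terms cancel and what remains is a time derivative of a natural energy plus a Volterra memory contribution. First I would differentiate (\ref{xi1}) in time and test with $\bw_h=\xs$, giving $\A\big(\frac{\partial\xq}{\partial t},\xs\big)-\A_1(\xi_{u_t},\xs)+J_1\big(\frac{\partial\xs}{\partial t},\xs\big)=0$. Simultaneously I test (\ref{xi3}) with $v_h=\xi_{u_t}$ to obtain $(\xi_{u_{tt}},\xi_{u_t})+\A_1(\xi_{u_t},\xs)+J(\xu,\xi_{u_t})=(\eta_{u_{tt}},\xi_{u_t})$. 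Adding the two relations, the $\A_1(\xi_{u_t},\xs)$ contributions cancel, and since $\A$, $J$, $J_1$ and (with $A$ symmetric) $\A_2$ are symmetric and time-independent, the surviving bilinear terms are each half a time derivative: $(\xi_{u_{tt}},\xi_{u_t})=\tfrac12\frac{d}{dt}\|\xi_{u_t}\|^2$, $J(\xu,\xi_{u_t})=\tfrac12\frac{d}{dt}J(\xu,\xu)$ and $J_1\big(\frac{\partial\xs}{\partial t},\xs\big)=\tfrac12\frac{d}{dt}J_1(\xs,\xs)$. To convert the leftover $\A\big(\frac{\partial\xq}{\partial t},\xs\big)$ I test (\ref{xi2}) with $\bt_h=\frac{\partial\xq}{\partial t}$, which yields $\A\big(\frac{\partial\xq}{\partial t},\xs\big)=\tfrac12\frac{d}{dt}\A_2(\xq,\xq)+\int_0^t\B\big(t,s;\xq(s),\frac{\partial\xq}{\partial t}\big)\,ds$.

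The outcome is the energy identity
\[
\tfrac12\frac{d}{dt}\Big[\|\xi_{u_t}\|^2+\A_2(\xq,\xq)+J(\xu,\xu)+J_1(\xs,\xs)\Big]+\int_0^t\B\big(t,s;\xq(s),\tfrac{\partial\xq}{\partial t}\big)\,ds=(\eta_{u_{tt}},\xi_{u_t}).
\]
Integrating this from $0$ to $\ta$, where $\ta$ is the instant at which the left-hand energy attains its maximum on $[0,T]$, and using coercivity $\A_2(\xq,\xq)\ge\alpha\|\xq\|^2$ together with $J,J_1\ge0$, the left side controls $\|\xi_{u_t}(\ta)\|^2+\alpha\|\xq(\ta)\|^2+J(\xu(\ta),\xu(\ta))+J_1(\xs(\ta),\xs(\ta))$, while the evaluated initial values reproduce exactly the data terms on the right of (\ref{xi_u}). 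The forcing is handled by $\int_0^\ta(\eta_{u_{tt}},\xi_{u_t})\,ds\le\big(\sup_{[0,T]}\|\xi_{u_t}\|\big)\int_0^T\|\eta_{u_{tt}}\|\,ds$, after which a Young inequality absorbs the supremum factor into the left-hand maximum.

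The norm $\|\xs\|$ does not appear in the energy, so I recover it separately by testing (\ref{xi2}) with $\bt_h=\xs$: this gives $\|\xs\|^2\le M\|\xq\|\,\|\xs\|+M\big(\int_0^t\|\xq(s)\|\,ds\big)\|\xs\|$, hence $\|\xs\|\le M\|\xq\|+M\int_0^t\|\xq(s)\|\,ds$, so $\|\xs\|$ is controlled once $\|\xq\|$ and its time integral are bounded. Combining this with the energy bound and the recovered $\|\xi_{u_t}\|$ and $\|\xq\|$ produces the left-hand side of (\ref{xi_u}).

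The main obstacle is the Volterra term $\int_0^t\B\big(t,s;\xq(s),\frac{\partial\xq}{\partial t}\big)\,ds$, which carries the undesirable factor $\frac{\partial\xq}{\partial t}$. I would remove the time derivative by integrating by parts in $t$ after the time integration: writing $w(s)=\int_0^s B(s,\tau)\xq(\tau)\,d\tau$ with $w(0)=0$, the contribution becomes $(w(\ta),\xq(\ta))-\int_0^\ta(w'(s),\xq(s))\,ds$, where $w'(s)=B(s,s)\xq(s)+\int_0^s\frac{\partial B}{\partial s}(s,\tau)\xq(\tau)\,d\tau$. Using the bound $M$ on $B$ and its derivatives, the boundary term obeys $|(w(\ta),\xq(\ta))|\le\varepsilon\|\xq(\ta)\|^2+C_\varepsilon\big(\int_0^\ta\|\xq\|\,ds\big)^2$; the first piece is absorbed by $\alpha\|\xq(\ta)\|^2$ on the left for $\varepsilon$ small, while the remaining terms are all of the form $\int_0^\ta\|\xq\|^2\,ds$ or iterated time integrals of $\|\xq\|$. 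A Gr\"onwall argument then closes the estimate and yields (\ref{xi_u}). The only care needed is in the bookkeeping of constants, which stay independent of $h$ and $p$ because coercivity of $A$ and the boundedness of $B$ involve only $\alpha$ and $M$.
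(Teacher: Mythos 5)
Your proposal is correct and follows essentially the same route as the paper's own proof: the identical choices of test functions ($\bw_h=\xs$ in the time-differentiated (\ref{xi1}), $\bt_h=\boldsymbol\xi_{\bq_t}$ in (\ref{xi2}), $v_h=\xi_{u_t}$ in (\ref{xi3})), the same energy identity, the same integration-by-parts treatment of the Volterra term (the paper writes it as a product-rule identity before integrating in time, which is the same computation as your $w(s)$ manipulation), the same max-point-plus-Gronwall closure, and the same recovery of $\|\xs\|$ by testing (\ref{xi2}) with $\bt_h=\xs$. The only point needing care is that for Gronwall to close you should take the maximizing time $\ta$ over $[0,t]$ for each fixed $t$ (as the paper does), not once over all of $[0,T]$, since otherwise the inequality $E(\ta)\leq C_1+C\int_0^{\ta}E(s)\,ds$ with $E(s)\leq E(\ta)$ only closes for small $T$.
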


\begin{proof}
We differentiate equation (\ref{xi1}) with respect to $t$ and choose $\bw_h=\xs$ in (\ref{xi1}),
${\bt}_h=\boldsymbol\xi_{\bq_t}$ in (\ref{xi2}) and $v_h=\xi_{u_t}$ in
(\ref{xi3}). By adding, we  obtain
\begin{eqnarray} \label{estimate-xit}
\frac{1}{2}\frac{d}{dt}\Big(\|\xi_{u_t}\|^2 &+& A_2(\xq,\xq)+
J_1(\xs,\xs)+J( \xu,\xi_{u})\Big)\\
&=&(\eta_{u_{tt}}, \xi_{u_t})-\int_0^t\B(t,s;\xq(s),\boldsymbol\xi_{\bq_t})\,ds. \nonumber
\end{eqnarray}
Next, we write the integral term on the right hand side of (\ref{estimate-xit}) as
\begin{eqnarray} \label{integral-term}
\int_0^t \B(t,s;\xq(s),\boldsymbol\xi_{\bq_t})\,ds &=&
\frac{d}{dt}\int_0^t\B(t,s;\xq(s),\xq)\,ds - \B(t,t,\xq(t),\xq) \\
&&-\int_0^t\B_t(t,s;\xq(s),\xq)\,ds. \nonumber
\end{eqnarray}
%\newpage
Substitute (\ref{integral-term}) in (\ref{estimate-xit}) and integrate from $0$ to $t.$
Then using the Cauchy-Schwarz inequality,
the boundedness of $B,$ the positive-definite property of $A$ and setting
$$\|(\xi_{u},\xi_{u_t},\xq,\xs)(t)\|^2=\|\xi_{u_t}(t)\|^2+||\ah\xq (t)||^2
+J_1(\xs(t),\xs(t))+J( \xu(t),\xi_{u}(t)),
$$
we arrive at
\begin{eqnarray}
||(\xi_{u},\xi_{u_t},\xq,\xs)(t)||^2&\leq& ||(\xi_{u},\xi_{u_t},\xq,\xs)(0)||^2
+2\int_0^t||\eta_{u_{tt}}||\,||\xi_{u_t}||ds\label{xi2b} \nonumber\\
&&+C(M,\alpha,T)\left(\int_0^t||\ah\xq(s)||\,||\ah\xq(t))||ds\right.\\
&&+\left.\int_0^t||\ah\xq(s)||^2ds\right).\nonumber
%\hspace{-2cm}
\end{eqnarray}
For some $t^\ast\in[0,t]$, let
$$
\|(\xi_{u},\xi_{u_t},\xq,\xs)(t^\ast)\|=\max_{0\leq s\leq t}\|(\xi_{u},\xi_{u_t},\xq,\xs)(s)\|.
$$
Then, at $t=t^\ast$, (\ref{xi2b}) becomes
\begin{eqnarray*}
\|(\xi_{u},\xi_{u_t},\xq,\xs)(t^\ast)\|&\leq& \|(\xi_{u},\xi_{u_t},\xq,\xs)(0)\|
+2\int_0^{t^\ast}\|\eta_{u_{tt}}\|\;ds\\
&&+C(M,\alpha,T)\int_0^{t^\ast}\|\ah\xq(s)\|\;ds,
%\hspace{-2cm}
\end{eqnarray*}
and hence,
\begin{eqnarray*}
\|(\xi_{u},\xi_{u_t},\xq,\xs)(t)\|&\leq& \|(\xi_{u},\xi_{u_t},\xq,\xs)(t^\ast)\|\\
&\leq& \|(\xi_{u},\xi_{u_t},\xq,\xs)(0)\|
+2\int_0^T\|\eta_{u_{tt}}\|\;ds\\
&&+C(M,\alpha,T)\int_0^T\|\ah\xq(s)\|\;ds.
%\hspace{-2cm}
\end{eqnarray*}
Now an application of  Gronwall lemma shows that
\begin{eqnarray} \label{estimate-xit2}
\|\xi_{u_t}\|+||\ah\xq ||
&\leq&C\Big(\|\xi_{u_t}(0)\|+||\xq (0)||+J_1(\xs(0),\xs(0))^\half\label{xi2-a}\\
&&+J( \xu(0),\xi_{u}(0))^\half+\int_0^T||\eta_{u_{tt}}||ds\Big).\nonumber
\end{eqnarray}
To estimate $\|\xs\|$, we choose $\bt_h=\xs$ in (\ref{xi2}) and use the
Cauchy-Schwarz inequality to arrive at
$$
\|\xs\|\leq C\Big(\|\ah\xq\|+\int_0^T\|\ah\xq(s)\|\,ds\Big).
$$
A use of  (\ref{estimate-xit2}) completes the proof of the lemma.
%This completes the rest of the proof.
\end{proof}

\noindent
 {\bf Proof of Theorem~\ref{thm:main}}. Using the triangle inequality, we can write
$$
 \|u_t-u_{ht}\|\le\|u_t-\tilde u_{ht}\|+\|\tilde u_{ht}-u_{ht}\|.
 $$
Now a use of Theorem~\ref{thm:u-up} and Lemma~\ref{5a} with the choices
$\tilde u_h(0)=\Pi_hu_0$, $\tilde u_{ht}(0)=\Pi_hu_1$,
and $\tilde\bq_h(0)={\bf I}_h\nabla u_0$ yields the estimate (\ref{eu2}). In the similar way,
we can find the estimate (\ref{eq-es}). This completes the rest of the proof. \hfill{$\Box$}

\begin{remark}
As a consequence of Lemma~\ref{5a} and the following inequality
\begin{eqnarray*}
\|\xi_{u}(t)\|\leq C\Big(\|\xi_{u}(0)\|+ \int_0^t\|\xi_{u_t}\|\,ds\Big),
\end{eqnarray*}
we now obtain an estimate of $\|\xi_{u}\|.$ This, in turn, provides the following $L^{\infty}(L^2)$ estimate
of $u-u_h$ as
\begin{eqnarray}\label{eu2-1}
\|u-u_{h}\|_{L^\infty(L^2(\Omega))}
&\leq & C \frac{h^{P+D}}{p^{\sQ+\sZ}}\Big(\|u_0\|_{H^{r+2}(\mathcal{T}_h)} +\|u_1\|_{H^{r+1}(\mathcal{T}_h)} \\
&&+\sum_{j=0}^{1}\left\|\frac{\partial^{j}u }{\partial t^j}\right\|_{L^1(H^{r+2}(\mathcal{T}_h))}
+ \|u_{tt}\|_{L^1(H^{r+2}(\mathcal{T}_h))}\Big).\nonumber
%\hspace{-.5cm}
\end{eqnarray}
\end{remark}

Note that as a consequence of Theorem \ref{thm:main}, we obtain   estimates  (\ref{eu2-1})
under the assumption of higher regularity result on the solution.
We now use a variant of Baker's nonstandard formulation (see \cite{Baker-3}) to
provide a proof of $L^{\infty}(L^2)$ estimate under reduced regularity result.

Now define the function $\hat{\phi}$ by
$$
\hat{\phi}(t)=\int_0^t\phi(s)ds.
$$
After integrating (\ref{xi2}) and (\ref{xi3}) with respect to $t$, we obtain the new system
\begin{eqnarray}
&&\A(\xq, \bw_h)-\A_1(\xu,\bw_h)+J_1(\xs,\bw_h)=0~~~~~~~~~~~\forall~ \bw_h\in {\bW}_h, \label{xi1-n}\\
&&\;\A_2(\hxq,{\bt}_h)-\A(\hxs, {\bt}_h )+\int_0^t\Big(\int_0^s \B(s,\tau;
\xq(\tau),{\bt}_h)d\tau\Big)\,ds=0~~~~~~~~~~~\forall ~{\bt}_h\in {\bW}_h, \label{xi2-n}\\
&&\;( \xi_{u_{t}}, v_h)+\A_1(v_h,\hxs)+J( \hxu,v_h)=(\eta_{u_{t}}, v_h)-(e_{ht}(0), v_h)~~~\forall ~ v_h\in V_h. \label{xi3-n}
\end{eqnarray}
Note that with
$u_{ht}(0)= \Pi_h u_1,$ we have
$$ (e_{ht}(0), v_h)=0,~~~\forall ~ v_h\in V_h.$$

\noindent
{\bf Proof of Theorem~\ref{FE-2}}.
Choose  $\bw_h=\hxs$ in (\ref{xi1-n}), ${\bt}_h=\xq$ in (\ref{xi2-n}) and $v_h=\xi_{u}$ in
(\ref{xi3-n}). Then adding the resulting equations, we find that
\begin{eqnarray*}
&&\frac{1}{2}\frac{d}{dt}\left[\|\xi_{u}\|^2+A_2(\hxq,\hxq)+
J_1(\hxs,\hxs)+J( \hxu,\hxu)\right]\\
&&\qquad \qquad=(\eta_{u_{t}}, \xu)-
\int_0^t\Big(\int_0^s\B(s,\tau;\xq(\tau),\xq)d\tau\Big)\,ds.
\end{eqnarray*}
Integrating from $0$ to $t,$ and using the non-negativity
of $J$ and $J_1$, we arrive at
\begin{eqnarray}\label{xiu-1}
\|\xi_{u}\|^2&+& \|A^\half\hxq\|^2= \|\xi_{u}(0)\|^2+2\int_0^t(\eta_{u_{t}},\xu)\,ds\\
&&-2\int_0^t\int_0^s \int_0^\tau \B(\tau,\ta;\xq(\ta),\xq(s))\;d\ta \;d\tau \,ds.\nonumber
\end{eqnarray}
Let $I$ denote the last term on the right hand side of (\ref{xiu-1}).   Integration by parts yields
\begin{eqnarray*}
&&\int_0^s \int_0^\tau \B(\tau,\ta;\xq(\ta),\xq(s))\;d\ta \;d\tau =
\int_0^s \B(\tau,\tau;\hxq(\tau),\xq(s))\;d\tau \\
&&\quad-\int_0^s \int_0^\tau \B_{\ta}(\tau,\ta;\hxq(\ta),\xq(s))\,d\ta \,d\tau,
\end{eqnarray*}
and therefore $I=-2(I_1-I_2)$ where $$I_1=\int_0^t\int_0^s \B(\tau,\tau;\hxq(\tau),\xq(s))d\tau ds,$$
and
$$I_2=\int_0^t\int_0^s \int_0^\tau \B_{\ta}(\tau,\ta;\hxq(\ta),\xq(s))d\ta d\tau ds.$$
Again, we integrate by parts so that
\begin{eqnarray*}
I_1&=&\int_0^t \B(s,s;\hxq(s),\hxq(t))ds-\int_0^t \B(s,s;\hxq(s),\hxq(s))ds\\
&\leq & M\left\{\|\hxq(t)\|\int_0^t\|\hxq(s)\|ds+ \int_0^t\|\hxq(s)\|^2ds\right\}.
\end{eqnarray*}
Similarly, we have
\begin{eqnarray*}
I_2&=& \int_0^t\int_0^s \B_{\tau}(s,\tau;\hxq(\tau),\hxq(t))d\tau ds
-\int_0^t\int_0^s \B_{\tau}(s,\tau;\hxq(\tau),\hxq(s))d\tau ds\\
&\leq & MT \left\{\|\hxq(t)\| \int_0^t\|\hxq(s)\|ds+ \int_0^t\|\hxq(s)\|^2ds\right\}.
\end{eqnarray*}
Using the Cauchy-Schwarz inequality and the bounds for $I_1$ and $I_2$, we obtain
\begin{eqnarray*}\label{xiu-2}
\|\xi_{u}(t)\|^2+\|A^\half\hxq(t)\|^2\label{xi2-a-n}&\leq& \|\xi_{u}(0)\|^2+2\int_0^t
\|\eta_{u_{t}}(s)\|\|\xi_{u}(s)\|ds\\
&&+C(M,\alpha,T)\left(\|A^\half\hxq(t)\|\int_0^t\|A^\half\hxq(s)\|ds\right.\nonumber\\
&&+ \left.\int_0^t\|A^\half\hxq(s)\|^2ds\right).
\nonumber
%\hspace{-2cm}
\end{eqnarray*}
Now, let $|||(\xi_{u},\hxq)(t)|||^2=\|\xi_{u}(t)\|^2+\|A^\half\hxq(t)\|^2$ and
$$|||(\xi_{u},\hxq)(t^\ast)|||=\max_{0\leq s\leq t}|||(\xi_{u},\hxq)(t)|||,$$
for some $t^\ast \in [0,t]$. Then, at $t=t^\ast$, we find that
\begin{eqnarray*}\label{xiu-2-1a}
|||(\xi_{u},\hxq)(t^\ast)|||&\leq& |||(\xi_{u},\hxq)(0)|||+2\int_0^{t^\ast}
\|\eta_{u_{t}}(s)\|ds\\
&&+C(M,\alpha,T)\int_0^{t^\ast}|||(\xi_{u},\hxq)(s)||\,ds,\nonumber
\nonumber   \hspace{-2cm}
\end{eqnarray*}
and therefore,
\begin{eqnarray*}\label{xiu-2-2a}
|||(\xi_{u},\hxq)(t)|||&\leq&|||(\xi_{u},\hxq)(0)|||+2\int_0^{t}\|\eta_{u_{t}}(s)\|\,ds\nonumber\\
&&+C(M,\alpha,T)\int_0^t|||(\xi_{u},\hxq)(s)|||\,ds.\nonumber
\nonumber
%\hspace{-2cm}
\end{eqnarray*}
An application of Gronwall lemma yields
$$
\|\xi_{u}(t)\|+\|A^\half\hxq(t)\|
\leq C\left(\|\xi_{u}(0)\| +\int_0^t||\eta_{u_{t}}||\,ds\right).
$$
Finally, a use of the triangle inequality and Theorem~\ref{thm:u-up} concludes the proof of
Theorem~\ref{FE-2}. \hfill{$\Box$}

\section{Fully discrete Scheme}
In this section, we first introduce some notations and formulate the fully discrete
DG scheme, then analyze its stability and discuss a priori error estimates.

\subsection{Notations and Scheme}
Let $\dt$ $(0<\dt<1)$ be the time step, $k=T/N$ for some positive integer $N$,  and $t_n=n\dt$.
For any function $\phi$ of time, let $\phi^n$ denote $\phi(t_n)$.
We shall use this notation for functions
defined for continuous in time as well as those defined for discrete in time.
We let $$U^\nph=\frac{U^{n+1}+U^n}{2},\qquad
U^{n;\qr}=\frac{U^{n+1}+2U^n+U^{n-1}}{4}=\frac{U^\nph+U^\nmh}{2},$$
and define the following terms for the discrete temporal derivatives:
$$
\ph U^\nph=\frac{U^{n+1}-U^n}{k},\qquad \pb U^\nph=\frac{U^\nph-U^\nmh}{k},
$$
$$
 \pc U^n=\frac{U^{n+1}-U^{n-1}}{2k}=
\frac{\ph U^\nph+\ph U^\nmh}{2},
$$
and
$$
\ps U^n=\frac{U^{n+1}-2U^n+U^{n-1}}{2k}=\frac{\ph U^\nph-\ph U^\nmh}{k}.
$$
The discrete-in-time scheme is based on a symmetric difference approximation
around the nodal points, and integral terms are computed by using
the second order quadrature formula
$$
\ii^n(\phi)=k\sum_{j=0}^{n-1}\phi(t_{j+1/2})\approx\int_0^{t_n}\phi(s)\,ds,\quad
\mbox{with} \quad t_{j+1/2}=(j+1/2)\dt.
$$
Thus, the discrete-in-time scheme for the problem
(\ref{21})-(\ref{23}) is to seek
$(U^{n},\Q^{n},\Z^{n})\in V_h\times{\bW}_h\times{\bW}_h$, such that
\begin{eqnarray}
&&\frac{2}{\dt}(\ph U^{\half},v_h)+\A_1(v_h,\Z^{\half})+
J(U^{\half},v_h)=(f^{\half}+\frac{2}{\dt}u_1,v_h), \label{7-1}\\
&&\A(\Q^{n+\half},\bw_h)-\A_1(U^{n+\half},\bw_h)+J_1(\Z^{n+\half}, \bw_h)=0,\; n\geq 0,\label{7-1a}\\
&&\A_2(\Q^{n+\half},{\boldsymbol \tau}_h)-\A(\Z^{n+\half},{\boldsymbol \tau}_h)+
\ii^{n+\half}(\B^{n+\half}(\Q,{\boldsymbol \tau}_h))=0,\; n\geq 0,\label{7-1b}\\
&&(\ps U^{n},v_h)+\A_1(v_h,\Z^{n;\qr})+
J(U^{n;\qr},v_h)=(f^{n;\qr},v_h),\; n\geq 1,\label{7-1c}
\end{eqnarray}
 for all $(v_h,{\bt}_h,\bw_h)\in V_h\times{\bW}_h\times{\bW}_h,$ with
given $(U^{0},\Q^{0},\Z^{0})\in V_h\times{\bW}_h\times{\bW}_h$. In (\ref{7-1b}),
$$
\ii ^{n+\half}(\B^{n+\half}(\Q,{\boldsymbol \tau}_h))=\frac{1}{2}\left[\ii^{n+1}
(\B^{n+1}(\Q,{\boldsymbol \tau}_h))+\ii^n(\B^n(\Q,{\boldsymbol \tau}_h)) \right],
$$
where
$$\ii^n(\B^n(\Q,\chi))=\dt\sum_{j=0}^{n-1}\B(t_n,t_{j+1/2};\Q^{j+1/2},\chi).$$
This choice of the time  discretization leads to a second order accuracy in $\dt$.

\subsection{Stability of the Discrete Problem}
We let $\Phi^n=(U^{n},\Q^{n},\Z^{n})$ and define the discrete energy norm
$$
|||\Phi^\nph|||^2=||\ph U^\nph||^2+||A^\half \Q^{n+1/2}||^2+J_1(\Z^{n+1/2},\Z^{n+1/2})+J(U^{n+1/2},U^{n+1/2}).$$
For the purpose of later error analysis, we shall first derive a stability
result for a modified scheme, namely;
\begin{eqnarray}
&& \A(\Q^{n+\half},\bw_h)-\A_1(U^{n+\half},\bw_h)+J_1(\Z^{n+\half}, \bw_h)=0,\label{7-1ak}\\
&& \A_2(\Q^{n+\half},{\boldsymbol \tau}_h)-\A(\Z^{n+\half},{\boldsymbol \tau}_h)+
\ii^{n+\half}(\B^{n+\half}(\Q,{\boldsymbol \tau}_h))=
G^\nph({\boldsymbol \tau}_h),\label{7-1bk}\\
&& (\ps U^{n},v_h)+\A_1(v_h,\Z^{n;\qr})+
J(U^{n;\qr},v_h)=(f^{n;\qr},v_h)+F^n(v_h),\label{7-1ck}
\end{eqnarray}
where $F^n$ and $G^n$ are linear functionals on $V_h$ and ${\bW}_h$,
respectively, with $G^0=0$. Set
$$
||F^n||=\sup_{\chi\in{V}_h,\chi\neq 0}
\frac{|F^n(\chi)|}{||\chi||},
$$
and similarly define the norm for $G^n$.
Then, the following stability result holds.
\begin{theorem}\label{thm:stab}%TTTTTTTTTTTTTTTTTTTTTTTTTTTTTTTTTTTTTTTTTTTTTTTTTTTTTTTTTTTTTTTT
There exist positive constants $C$ and $k_0$ such that for $0<k\leq k_0$, $m\geq 1$, $t_{m+1}\leq T,$
the solution of the fully  discrete problem $(\ref{7-1ak})$-$(\ref{7-1ck})$
satisfies the following stability estimate
\begin{equation}
|||\Phi^{m+\half}|||\leq C\left\{
|||\Phi^\half|||+k\sum_{n=0}^{m+1}||f^{n}||
+k\sum_{n=1}^{m}||F^{n}||
+k\sum_{n=0}^{m}||\ph G^{n+1/2}||
\right\}.
\end{equation}
\end{theorem}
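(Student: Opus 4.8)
The plan is to derive a discrete energy identity by testing the three relations of the modified scheme with suitable discrete temporal derivatives, exactly mirroring the semidiscrete choices $\bw_h=\xs$, $\bt_h=\boldsymbol\xi_{\bq_t}$, $v_h=\xi_{u_t}$ of Lemma~\ref{5a}, and then to close the argument with a discrete Gronwall lemma. First I would form the backward difference in time of (\ref{7-1ak}): subtracting the relation at level $n-\half$ from the one at $n+\half$ and dividing by $\dt$, and using $(\Q^\nph-\Q^\nmh)/\dt=\pb\Q^\nph$ and $(U^\nph-U^\nmh)/\dt=\pc U^n$, gives
\begin{equation*}
\A(\pb\Q^\nph,\bw_h)-\A_1(\pc U^{n},\bw_h)+J_1(\pb\Z^\nph,\bw_h)=0 .
\end{equation*}
Next I would average (\ref{7-1bk}) over the levels $n\pm\half$, noting $(\Q^\nph+\Q^\nmh)/2=\Q^{\nq}$ and $(\Z^\nph+\Z^\nmh)/2=\Z^{\nq}$, to obtain an identity anchored at the integer node $n$. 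I then test the differenced (\ref{7-1ak}) with $\bw_h=\Z^{\nq}$, the averaged (\ref{7-1bk}) with $\bt_h=\pb\Q^\nph$, and (\ref{7-1ck}) with $v_h=\pc U^{n}=(\ph U^\nph+\ph U^\nmh)/2$.

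Adding the three tested relations, the coupling terms $\A(\pb\Q^\nph,\Z^{\nq})$ cancel by symmetry of $\A$, and the coupling terms $\A_1(\pc U^{n},\Z^{\nq})$ cancel, leaving only diagonal contributions. Using the elementary identity $(a-b,(a+b)/2)=\frac{1}{2}(\|a\|^2-\|b\|^2)$ together with the symmetry of $\A_2$, $J$ and $J_1$, each diagonal term is a perfect telescoping difference: after multiplying by $\dt$ and summing from $n=1$ to $m$, the contributions $(\ps U^{n},\pc U^{n})$, $\A_2(\Q^{\nq},\pb\Q^\nph)$, $J_1(\pb\Z^\nph,\Z^{\nq})$ and $J(U^{\nq},\pc U^{n})$ collapse to $\frac{1}{2}\bigl(|||\Phi^{m+\half}|||^2-|||\Phi^\half|||^2\bigr)$. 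What survives on the right is the data pairing $\dt\sum_{n=1}^{m}\bigl[(f^{\nq},\pc U^{n})+F^{n}(\pc U^{n})\bigr]$, the averaged functional $\dt\sum_{n=1}^{m}\frac{1}{2}(G^\nph+G^\nmh)(\pb\Q^\nph)$, and the memory contribution arising from $\ii^{n\pm\half}(\B^{n\pm\half}(\Q,\pb\Q^\nph))$.

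The data pairing is handled directly by the Cauchy-Schwarz inequality: writing $\|\pc U^{n}\|\le\max_n\|\ph U^\nph\|\le|||\Phi^\star|||$ with $|||\Phi^\star|||:=\max_{0\le n\le m}|||\Phi^{n+\half}|||$, and bounding $\|f^{\nq}\|\le\frac{1}{4}(\|f^{n+1}\|+2\|f^{n}\|+\|f^{n-1}\|)$, reproduces the $\dt\sum\|f^{n}\|$ and $\dt\sum\|F^{n}\|$ terms. The averaged functional and the memory term, however, both contain the difference $\pb\Q^\nph$, which the energy does not control, so here I would apply summation by parts (Abel's transformation) to shift the difference off $\Q$. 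For the $G$ term this transfers the difference onto $G$, producing $\ph G^{n+1/2}$ together with boundary contributions that are absorbed using $G^0=0$ and $\|\Q^\nph\|\le C\|A^\half\Q^\nph\|\le C|||\Phi^\nph|||$, yielding the claimed $\dt\sum\|\ph G^{n+1/2}\|$.

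The memory term is the principal obstacle and the place where the finer analysis enters: after a summation by parts in the quadrature index (the discrete counterpart of the integration by parts in (\ref{integral-term}), now needing both $B$ and $B_t$), I would use the boundedness of $B$ and $B_t$ by $M$ and the Cauchy-Schwarz inequality to bound it by $C\dt\sum_{n}|||\Phi^{n+\half}|||\,|||\Phi^\star|||$ plus a remainder proportional to $\dt\,|||\Phi^\star|||^2$, the latter being absorbed into the left-hand side provided $\dt\le k_0$ for $k_0$ small, which is where the time-step restriction originates. Finally, evaluating the accumulated inequality at the maximising index, cancelling one factor of $|||\Phi^\star|||$ exactly as in Lemma~\ref{5a}, and invoking the discrete Gronwall lemma delivers the stated stability estimate.
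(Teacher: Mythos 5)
Your proposal is correct and follows essentially the same route as the paper: the same test functions (your $\pb\Q^{n+1/2}$ equals the paper's $\delta_t\Q^n$), the same differencing of the first equation and averaging of the second, Abel summation to shift discrete differences off $\Q$ onto $B$ and $G$ (using $G^0=0$), the maximizing-index trick with absorption of the $O(k)$ energy remainder for $k\leq k_0$, and the discrete Gronwall lemma. The only cosmetic deviation is that you telescope from $n=1$ directly down to $|||\Phi^{1/2}|||$, whereas the paper sums from $n=2$ and bounds $|||\Phi^{3/2}|||$ separately, which changes nothing of substance.
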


\begin{proof} We first subtract (\ref{7-1ak}) from itself with $n+1/2$ replaced
by $n-1/2$ and take the average of (\ref{7-1bk}) at two
different times. Then, we choose $\bw_h=\Z^{n;\qr}$ in
(\ref{7-1ak}),  ${\boldsymbol \tau}_h=\pc \Q^n$ in (\ref{7-1bk}),
and $v_h=\pc U^n$ in (\ref{7-1ck}) to obtain
\begin{eqnarray}
\A(\pc\Q^n,\Z^{n;\qr})&-&\A_1(\pc U^n,\Z^{n;\qr})+J_1(\pc \Z^n, \Z^{n;\qr})=0,\label{7-2aa}\\
\A_2(\Q^{n;\qr},\pc \Q^n)&-&\A(\Z^{n;\qr},\pc\Q^n)+
\frac{1}{2}\Big(\ii^{n+\half}(\B^{n+\half}(\Q,\pc\Q^n)) \label{7-2bb}\\
&+& \ii^{n-\half}(\B^{n-\half}(\Q,\pc\Q^n))\Big)=G^\nq(\pc\Q^n),\nonumber\\
(\ps U^n,\pc U^n)&+&\A_1(\pc U^n,\Z^{n;\qr})+
J(U^{n;\qr},\pc U^n) \label{7-2cc}\\
&=&(f^{n;\qr},\pc U^n)+F^n(\pc U^n). \nonumber
\end{eqnarray}
By adding, we find that
\begin{eqnarray}\label{7-3}
(\ps U^n,\pc U^n)&+&\A_2(\Q^{n;\qr},\pc \Q^n)+J_1(\pc \Z^n,
\Z^{n;\qr})+J(U^{n;\qr},\pc U^n) \nonumber\\
&=&(f^{n;\qr},\pc U^n)+F^n(\pc U^n)-\frac{1}{2}
\left(\ii^{n+\half}(\B^{n+\half}(\Q,\pc\Q^n))\right.\\
&&+\left.\ii^{n-\half}(\B^{n-\half}(\Q,\pc\Q^n))\right)+G^\nq(\pc\Q^n) \nonumber\\
&=& I_1^n+I_2^n+I_3^n+I_4^n.\nonumber
\end{eqnarray}
Notice that
$$(\ps U^n,\pc U^n)=\frac{1}{2k}\left(||\ph U^\nph||^2-||\ph U^\nmh||^2\right),$$
and
$$
\A_2(\Q^{n;\qr},\pc \Q^n)=\frac{1}{2k}\left(\A_2(\Q^{n+1/2},\Q^{n+1/2})-\A_2(\Q^{n-1/2},\Q^{n-1/2})\right).
$$
Now, we multiply both sides of (\ref{7-3}) by $2\dt$ and sum from $n=2$ to $m$,
to obtain
\begin{equation}\label{7-4}
|||\Phi^{m+1/2}|||^2
\leq |||\Phi^{3/2}|||^2
+2\dt\left|\sum_{n=2}^m(I_1^n+I_2^n+I_3^n+I_4^n)\right|.
\end{equation}
Define for some $m^{\star}$ with $ 0 \le m^{\star}\leq m,$
$$|||\Phi^{m^{\star}+1/2}||| =\max_{0\leq n\leq m}|||\Phi^\nph|||.$$
An application of the Cauchy-Schwarz inequality to (\ref{7-4}) yields
\begin{eqnarray*}
\dt\left|\sum_{n=2}^m (I_1^n+I_2^n)\right|
&\leq & \frac{\dt}{2}\sum_{n=2}^m\left(\|f^{n;\qr}\|+\|F^n\|\right)\,
\left(\|\ph U^\nph\|+\|\ph U^\nmh\| \right)\\
&\leq & \dt \sum_{n=2}^{m}\left(\|f^{n;\qr}\|+\|F^n\|\right)|||\Phi^{m^{\star}+1/2}|||.
\end{eqnarray*}
Next, we set
$$ \tilde{B}^{n+\half}_{j+\half}=\frac{1}{2}\left(B (t_{n+1},t_\jph)+ B (t_{n},t_\jph) \right).$$
In order to estimate $I_3^n$, we write
\begin{equation*}
\begin{split}
\ii^{n+\half}&(\B^{n+\half}(\Q,\pc\Q^n))\\
=&\frac{\dt}{2}\left[\sum_{j=0}^{n} \B(t_{n+1},t_{j+1/2};\Q^{j+1/2},\pc \Q^n)
+\sum_{j=0}^{n-1} \B(t_n,t_{j+1/2};\Q^{j+1/2},\pc \Q^n)\right]\\
=&\frac{1}{2}\int_\Omega B(t_{n+1},t_\nph)\Q^\nph\cdot(\Q^\nph-\Q^\nmh)\,dx\\
&+\sum_{j=0}^{n-1}\int_\Omega\Bnj \Q^\jph\cdot(\Q^\nph-\Q^\nmh)\,dx\\
=&T_1^n+T_2^n.
\end{split}
\end{equation*}
The first term $T_1$ can be bounded as follows
$$
|T_1^n|\leq\frac{M}{2} \left(\|\Q^\nph\|^2+\|\Q^\nph\|\,\|\Q^{n-\half}\|\right)
$$
showing that
\begin{equation}\label{s4}
\dt \left|\sum_{n=2}^{m}T_1^n\right|\leq
\frac{M\dt}{2\alpha}  \|A^{\half}\Q^{m+\half}\|^2 +
C(M,\alpha) \dt\left(\sum_{n=1}^{m-1}\|A^{\half}\Q^\nph\|\right)|||\Phi^{\msph}|||.
\end{equation}
For the second term $T_2^n$, we use the fact that
\begin{equation}\label{eq:formula}
H^\nph\pb\Q^\nph=\pb(H^\nph\Q^\nph)-\pb(H^\nph)\Q^\nmh,
\end{equation}
and obtain after summation
%\vspace*{-.3cm}
\begin{eqnarray*}
T_2^n&=&k\sum_{j=0}^{n-1}\int_\Omega \pb(\Bnj \Q^\nph)\cdot\Q^\jph\,dx-
k\sum_{j=0}^{n-1}\int_\Omega \pb\left(\Bnj\right)\Q^\jph\cdot\Q^\nmh\,dx\\
&=&k\pb\left(\sum_{j=0}^{n-1}\int_\Omega \Bnj \Q^\nph\cdot \Q^\jph\,dx\right)
+\int_\Omega{\tilde B}^{n-\half}_{n-\half}\Q^\nmh\Q^\nmh\,dx\\
&&-k\sum_{j=0}^{n-1}\int_\Omega \pb\left(\Bnj\right)\Q^\jph\cdot\Q^\nmh\,dx.
\end{eqnarray*}
Hence, we arrive at
\begin{eqnarray*}
k\left|\sum_{n=2}^{m}T_2^n\right|&\leq&
k^2\left|\sum_{j=0}^{m-1}\int_\Omega {\tilde B}^{m+\half}_\jph \Q^\jph\cdot \Q^{m+\half}\,dx
-\int_\Omega {\tilde B}^{3/2}_{1/2}\Q^{3/2}\cdot\Q^{1/2}\,dx\right|\\
&&+k\left|\sum_{n=2}^{m}\int_\Omega{\tilde B}^{n-\half}_{n-\half}\Q^\nmh\cdot\Q^\nmh\,dx\right|\\
&&+k^2\left|\sum_{n=2}^{m}\sum_{j=0}^{n-1}\int_\Omega \pb\left(\Bnj\right)\Q^\jph\cdot\Q^\nmh\,dx\right|
\\
&\leq& M k^2 \|\Q^{m+\half}\|\sum_{j=0}^{m-1}\|\Q^\jph\|+Mk^2\|\Q^{3/2}\|\,\|\Q^{1/2}\|\\
&&+Mk\sum_{n=2}^{m}\|\Q^\nmh\|^2
+MTk\sum_{j=0}^{m-1} \|\Q^\jph\|^2\\
&\leq& C(M,\alpha,T)k\left(\sum_{n=0}^{m-1}\|A^{\half}\Q^\nph\|\right)|||\Phi^{\msph}|||.
\end{eqnarray*}
We can now estimate $\ii^{n-\half}(\B^{n-\half}(\Q,\pc\Q^n))$ in a similar way, but without
having the term $\|A^{\half}\Q^{m+\half}\|^2$ on the right hand
side of (\ref{s4}), and thus, obtain
$$
k\left|\sum_{n=2}^{m}I_3^n\right|\leq
\frac{Mk}{4\alpha}\|A^{\half}\Q^{m+\half}\|^2
+ C(M,\alpha,T)k\left(\sum_{n=0}^{m-1}\|A^{\half}\Q^\nph\|\right)|||\Phi^{\msph}|||.
$$
For the last term  $I_4=kG^\nq(\pc\Q^n)$,
we  again use summation by parts technique to arrive at
$$
k\left|\sum_{n=2}^{m}I_4^n\right|
\leq\left|G^{m;1/4}(\Q^{m+1/2})-G^{1;1/4}(\Q^{3/2})-
k\sum_{n=2}^{m}\pb (G^{n;1/4})(\Q^\nmh)\right|.$$
Notice that, since $G^0=0$, it follows that
$$
G^n=k\sum_{n=0}^{n-1}\ph G^\nph,
$$
and hence, we obtain
$$
k\left|\sum_{n=2}^{m}I_4^n\right|\leq C(\alpha)k
\left(\sum_{n=0}^{m}\|\ph G^{n+\half}\|\right)|||\Phi^{\msph}|||.
$$
It remains now to bound the  term $|||\Phi^{3/2}|||$ on the right hand side of (\ref{7-4}).
Equation (\ref{7-3}) taken at $n=1$ yields
\begin{eqnarray}\label{7-3b}
|||\Phi^{3/2}|||^2&\leq&|||\Phi^{1/2}|||^2+2k\left|(f^{1;1/4},\pc U^1)+
F^{1}(\pc U^1)-\frac{1}{2}\left(\ii^{3/2}(\B^{3/2}(\Q,\pc\Q^1))\right.\right.\nonumber\\
&&\left.\left.+\ii^{1/2}(\B^{1/2}(\Q,\pc\Q^1)\right)+G^{1;1/4}(\pc\Q^1)\right|\nonumber\\
&\leq&|||\Phi^{1/2}|||^2+C(M,\alpha)k\left(||f^{1;1/4}\|+\|F^1\|+
||A^\half\Q^{1/2}||\right.\\
&&\left.+\|A^\half\Q^{3/2}\|+\|\ph G^{1/2}\|+\|\ph G^{3/2}\|\right)
|||\Phi^{\msph}|||.\nonumber
\end{eqnarray}
Now, substitute  estimates involving $I_1^n,\cdots,I_4^n$ and (\ref{7-3b})  in (\ref{7-4}) to find that
\begin{eqnarray}\label{7-3c}
\Big(1-\frac{M}{2\alpha}k\Big)\,|||\Phi^{m+\half}|||^2&\leq & |||\Phi^{1/2}|||^2
+Ck\Big\{\sum_{n=0}^{m+1}\|f^n\|+\sum_{n=1}^{m}\|F^n\| \nonumber\\
&& +\sum_{n=0}^{m}\|\ph G^{n+\half}\|+\sum_{n=0}^{m-1}|||\Phi^\nph|||\Big\}
|||\Phi^{\msph}|||,
\end{eqnarray}
Choose $k_0>0$ such that for $0 < k \leq k_0,$ $ (1-\frac{M}{2\alpha}k )>0.$ Then
replace $m$ by $m^{\star}$ in (\ref{7-3c}) to obtain
\begin{eqnarray}\label{7-3d}
|||\Phi^{\msph}||| &\leq &  C\Big\{ |||\Phi^{1/2}|||
+ k \sum_{n=0}^{m^{\star}+1}\|f^n\|+k\sum_{n=1}^{m^{\star}}\|F^n\| \\
&+& k\sum_{n=0}^{m^{\star}}\|\ph G^{n+\half}\|+k\sum_{n=0}^{m^{\star}-1}|||\Phi^\nph|||\Big\},\nonumber
\end{eqnarray}
and hence, replacing $m^{\star}$ by $m$ on the right hand side of (\ref{7-3d}), it follows that
\begin{eqnarray}\label{7-3e}
|||\Phi^{m+\half}||| &\leq& |||\Phi^{\msph}||| \leq   C\Big\{ |||\Phi^{1/2}|||
+ k \sum_{n=0}^{m+1}\|f^n\|+k\sum_{n=1}^{m}\|F^n\| \nonumber\\
&+& k\sum_{n=0}^{m}\|\ph G^{n+\half}\|+k\sum_{n=0}^{m-1}|||\Phi^\nph|||\Big\}.
\end{eqnarray}
An application of the discrete Gronwall lemma to (\ref{7-3e})
completes the rest of the proof.
\end{proof}

\subsection{Convergence Analysis}
For ${\boldsymbol\phi}\in\bW_h$, we define a linear functional
$\ce^{n}({\boldsymbol\phi})$  representing the error in
 the quadrature formula by
$$
\ce^{n}({\boldsymbol\phi})({\boldsymbol\chi})=\ii^{n}\left(\B^{n}({\boldsymbol\phi},
{\boldsymbol\chi})\right)-\int_0^{t_n}\B(t_n,s;{\boldsymbol\phi},{\boldsymbol\chi})\,ds.
$$
Notice that $\ce^{0}({\boldsymbol\phi})=0$.
In our analysis, we shall use the following lemma which can be found in
\cite{PTW}.
\begin{lemma}\label{lm:8-1}
There exists a positive constant $C,$ independent of $h$ and $k$ such that
the following estimates
$$
k\sum_{n=0}^{m}||\ce^{n+1}(\tilde{\bq}_h)||\leq Ck^2
\int_0^{t_{m+1}}(||\tilde{\bq}_h||+||\tilde{\bq}_{ht}||+
||\tilde{\bq}_{htt}||)\,ds,
$$
and
$$
k\sum_{n=0}^{m}||\ph\ce^{n+1/2}(\tilde{\bq}_h)||\leq Ck^2
\int_0^{t_{m+1}}(||\tilde{\bq}_h||+||\tilde{\bq}_{ht}||+
||\tilde{\bq}_{htt}||)\,ds,
$$
hold.
\end{lemma}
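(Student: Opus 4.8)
The plan is to recognize $\ii^n$ as the composite midpoint rule and to bound the quadrature error through its Peano-kernel (equivalently, midpoint-Taylor) representation. For fixed $\boldsymbol\chi\in\bW_h$ and fixed $n$, set $g_n(s):=\B(t_n,s;\tilde{\bq}_h(s),\boldsymbol\chi)=\int_\Omega B(t_n,s)\tilde{\bq}_h(s)\cdot\boldsymbol\chi\,dx$, so that by definition $\ce^n(\tilde{\bq}_h)(\boldsymbol\chi)=k\sum_{j=0}^{n-1}g_n(t_\jph)-\int_0^{t_n}g_n(s)\,ds$ is exactly the error of the composite midpoint rule applied to $g_n$ on $[0,t_n]$. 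On a single subinterval a Taylor expansion about the midpoint $t_\jph$ gives $\big|\int_{t_j}^{t_{j+1}}g_n(s)\,ds-k\,g_n(t_\jph)\big|\le Ck^2\int_{t_j}^{t_{j+1}}|g_n''(s)|\,ds$, where $'$ denotes $\partial_s$; summing over $j$ yields $|\ce^n(\tilde{\bq}_h)(\boldsymbol\chi)|\le Ck^2\int_0^{t_n}|g_n''(s)|\,ds$.

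For the first estimate I would differentiate $g_n$ twice in $s$ by the product rule, $g_n''(s)=\int_\Omega\big(B_{ss}\,\tilde{\bq}_h+2B_s\,\tilde{\bq}_{ht}+B\,\tilde{\bq}_{htt}\big)\cdot\boldsymbol\chi\,dx$, and use the boundedness of $B$ and its $s$-derivatives by $M$ together with Cauchy--Schwarz to get $|g_n''(s)|\le CM\|\boldsymbol\chi\|\big(\|\tilde{\bq}_h(s)\|+\|\tilde{\bq}_{ht}(s)\|+\|\tilde{\bq}_{htt}(s)\|\big)$. Dividing by $\|\boldsymbol\chi\|$ and taking the supremum gives $\|\ce^{n+1}(\tilde{\bq}_h)\|\le Ck^2\int_0^{t_{n+1}}(\|\tilde{\bq}_h\|+\|\tilde{\bq}_{ht}\|+\|\tilde{\bq}_{htt}\|)\,ds$. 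Since $t_{n+1}\le t_{m+1}$ and the integrand is nonnegative, each summand is bounded by the integral over $[0,t_{m+1}]$; multiplying by $k$ and summing over $0\le n\le m$ produces the factor $(m+1)k=t_{m+1}\le T$, absorbed into $C$, which is the first claimed bound.

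The second estimate is the main obstacle, because bounding $\ph\ce^\nph=(\ce^{n+1}-\ce^n)/k$ naively only gives $O(k)$; the $O(k^2)$ rate must come from a cancellation. I would telescope carefully: splitting off the newly added subinterval gives $\ce^{n+1}-\ce^n=\big[k\sum_{j=0}^{n-1}(g_{n+1}-g_n)(t_\jph)-\int_0^{t_n}(g_{n+1}-g_n)(s)\,ds\big]+\big[k\,g_{n+1}(t_\nph)-\int_{t_n}^{t_{n+1}}g_{n+1}(s)\,ds\big]$. The first bracket is the midpoint quadrature error for $g_{n+1}-g_n$; since $B(t_{n+1},s)-B(t_n,s)=\int_{t_n}^{t_{n+1}}B_t(\tau,s)\,d\tau$ is $O(k)$ together with all its $s$-derivatives (by the bound $M$), the estimate above gains an extra factor $k$, so this bracket is $O(k^3)$ and contributes $Ck^2\int_0^{t_n}(\cdots)\,ds$ after division by $k$. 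The second bracket is the single-interval midpoint error for $g_{n+1}$, bounded by $Ck^2\int_{t_n}^{t_{n+1}}(\cdots)\,ds$, hence $Ck\int_{t_n}^{t_{n+1}}(\cdots)\,ds$ after division by $k$.

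Applying $k\sum_{n=0}^m$ then finishes: the first contribution produces the factor $(m+1)k\le T$ exactly as in the first estimate, while the second contribution telescopes the subinterval integrals into a single integral over $[0,t_{m+1}]$; both give $Ck^2\int_0^{t_{m+1}}(\|\tilde{\bq}_h\|+\|\tilde{\bq}_{ht}\|+\|\tilde{\bq}_{htt}\|)\,ds$. The key point to get right is this splitting, ensuring that the time increment of the kernel $B$ supplies the missing power of $k$ that compensates the division by $k$ in the difference quotient $\ph$; everything else is the same midpoint-rule error bound combined with the boundedness of $B$ and Cauchy--Schwarz.
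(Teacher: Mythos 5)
Your proof is correct, and it is worth noting that the paper itself gives no proof of this lemma at all, deferring to Pani--Thom\'ee--Wahlbin \cite{PTW}; your argument --- identifying $\ce^{n}(\tilde{\bq}_h)(\boldsymbol\chi)$ as the composite midpoint error of $g_n(s)=\B(t_n,s;\tilde{\bq}_h(s),\boldsymbol\chi)$, using the Peano/Taylor bound $\bigl|\int_{t_j}^{t_{j+1}}g_n\,ds-k\,g_n(t_{\jph})\bigr|\le Ck^2\int_{t_j}^{t_{j+1}}|g_n''|\,ds$, and, for the difference quotient, splitting $\ce^{n+1}-\ce^{n}$ into the midpoint error of $g_{n+1}-g_n$ (which gains the extra factor $k$ because $B(t_{n+1},s)-B(t_n,s)=\int_{t_n}^{t_{n+1}}B_t(\tau,s)\,d\tau$, and likewise for its $s$-derivatives) plus the single newly added subinterval's midpoint error, whose contributions telescope under $k\sum_{n=0}^m$ --- is precisely the standard route taken in that reference. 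The only hypotheses you invoke beyond boundedness of $B$, namely bounds on the mixed derivatives $B_t$, $B_{ts}$, $B_{tss}$, are covered by the paper's standing assumption that all coefficients of $B$ are smooth with bounded derivatives, so your proof is a complete and faithful reconstruction of the omitted argument.
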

In order to derive {\it a priori} error estimates for the fully discrete scheme, we rewrite
\begin{eqnarray*}
u^n-U^n:=(u^n-\tilde u^n_h)-(U^n-\tilde u^n_h)=:\eU^n-\xU^n,\\
          {\bq^n-\Q^n}:=(\bq^n-\tilde{\bq}^n_h)-(\Q^n-\tilde{\bq}^n_h)=:\eQ^n-\xQ^n,\\
          \bs^n-\Z^n:=(\bs^n-\tilde{\bs}^n_h)-(\Z^n-\tilde{\bs}^n_h)=:\eZ^n-\xZ^n.
\end{eqnarray*}
The following theorem provides {\it a priori} error estimates for the fully discrete scheme.
\begin{theorem}\label{thm:6-1}
%Let $\Omega$ be a bounded convex polygon domain in $\R^2$ and
Let $(u,\bq,\bs)$ be the solution of $(\ref{21})$-$(\ref{23})$.
Further, let $(U^n,\Q^n,\Z^n)\in V_h\times{\bW}_h\times  {\bW}_h$ be the
solution of $(\ref{7-1})$-$(\ref{7-1c})$. Assume that $U^0=\tilde{u}_0$,
$\Q^0={\bf I}_h\nabla u_0$ and $\Z^0={\bf I}_h(A\nabla u_0)$.
Then there exists  constants $C>0,$ independent of $h$ and $k$,
and  $k_0>0,$ such that for $0<k<k_0$ and
$m=0,1,\cdots,N-1$
\begin{eqnarray}\label{eu2-d}
&&\|\ph (u(t_{m+\half})-U^{m+\half})\|_{L^2(\Omega)}
\leq  C \frac{h^{P+D}}{p^{\sQ+\sZ}}\Big(\|u_0\|_{H^{r+2}(\mathcal{T}_h)} +\|u_1\|_{H^{r+2}(\mathcal{T}_h)} \\
&& \quad +\sum_{j=0}^{2}\left\|\frac{\partial^{j}u }{\partial t^j}\right\|_{L^1(H^{r+2}(\mathcal{T}_h))}\Big)
+ Ck^2\sum_{j=3}^{4}\int_{0}^{t_{m+1}}
\left\|\frac{\partial^j u}{{\partial t^j}}(s)\right\|\,ds,\nonumber
%\hspace{-.5cm}
\end{eqnarray}
and
\begin{eqnarray}\label{eu2-d-b}
&&\|{\bf q}(t_{m+\half})-\Q^{m+\half}\|_{L^2(\Omega)}+
\|\bs(t_{m+\half})-\Z^{m+\half}\|_{L^2(\Omega)}
\leq \\
&& \qquad C \frac{h^{P}}{p^{\sQ}}\Big(\|u_0\|_{H^{r+2}(\Omega_h)}
 +\|u_1\|_{H^{r+2}(\Omega_h)}+\sum_{j=0}^{2}\left\|\frac{\partial^{j}u }
{\partial t^j}\right\|_{L^1(H^{r+2}(\Omega_h))}\Big)\nonumber\\
&& \qquad + Ck^2\sum_{j=3}^{4}\int_{0}^{t_{m+1}}
\left\|\frac{\partial^j u}{{\partial t^j}}(s)\right\|ds,\nonumber
%\hspace{-.5cm}
\end{eqnarray}
where $P=\min\{r+\frac{1}{2}(1+\mu_*),p+\frac{1}{2}(1-\mu^*)\},~D=\frac{1}{2}(1+\mu_*)$,
$\sQ=r+\min\{\mu_*,1-\mu^*\}$ and $\sZ=\min\{\frac{1}{2},\mu_*\}$.
\end{theorem}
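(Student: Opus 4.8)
The plan is to estimate the error through the extended mixed Ritz--Volterra projection exactly as in the semidiscrete analysis, reducing everything to the stability estimate of Theorem~\ref{thm:stab}. Using the splitting $u^n-U^n=\eU^n-\xU^n$, $\bq^n-\Q^n=\eQ^n-\xQ^n$, $\bs^n-\Z^n=\eZ^n-\xZ^n$, the projection parts $\eU,\eQ,\eZ$ (and their time derivatives) are already controlled by Theorem~\ref{thm:u-up}, so it suffices to bound the purely discrete quantities $\xU^n,\xQ^n,\xZ^n$ and then use the triangle inequality. The $h^{P+D}/p^{\sQ+\sZ}$ and $h^{P}/p^{\sQ}$ terms on the right of (\ref{eu2-d}) and (\ref{eu2-d-b}) will come from these projection estimates, while the genuinely new $O(k^2)$ contributions will be produced in the course of bounding the $\xi$-variables.

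First I would set up the error equations for $(\xU^n,\xQ^n,\xZ^n)$. Writing the projection identities (\ref{p1})--(\ref{p3}) at the appropriate nodal and time-averaged points and using that the exact triple $(u,\bq,\bs)$ makes the continuous analogues vanish (so that $J(\tilde u_h,v_h)$ and the memory term reduce correctly via consistency), the projection terms cancel against the exact-solution terms when these identities are subtracted from the fully discrete equations (\ref{7-1a})--(\ref{7-1c}). What survives is precisely the modified scheme (\ref{7-1ak})--(\ref{7-1ck}) with $(U,\Q,\Z)$ replaced by $(\xU,\xQ,\xZ)$, with no $f$-source (the term $(f^{\nq},v_h)$ cancels identically) and with two consistency functionals. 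The functional $G^{\nph}$ is, up to the averaging built into $\ii^{\nph}$, the quadrature error $\ce^{\nph}(\tilde\bq_h)$ incurred by replacing the exact Volterra integral in (\ref{p2}) by the second-order rule acting on $\tilde\bq_h$; the functional $F^n$ equals $(u_{tt}^{\nq}-\ps\tilde u_h^n,\cdot)$, which splits as the temporal truncation $u_{tt}^{\nq}-\ps u^n$ plus the projection-in-time remainder $\ps\eU^n$.

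Next I would bound the two functionals and the initial energy. Since $u_{tt}^{\nq}$ and $\ps u^n$ are both symmetric second-order approximations of $u_{tt}(t_n)$, a Taylor expansion shows that $\|u_{tt}^{\nq}-\ps u^n\|$ is of order $k^2$ with fourth time-derivative weight, the third-derivative contribution entering through the special first step (\ref{7-1}); after forming $k\sum_n(\cdot)$ these assemble into the factor $k^2\sum_{j=3}^4\int_0^{t_{m+1}}\|\partial_t^j u\|\,ds$, while $\|\ps\eU^n\|$ is absorbed by Theorem~\ref{thm:u-up}. For $G$, Lemma~\ref{lm:8-1} supplies directly $k\sum_{n=0}^m\|\ph G^{\nph}\|\le Ck^2\int_0^{t_{m+1}}(\|\tilde\bq_h\|+\|\tilde\bq_{ht}\|+\|\tilde\bq_{htt}\|)\,ds$, which is again $O(k^2)$. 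The initial norm $|||(\xU,\xQ,\xZ)^{\half}|||$ is estimated from the prescribed data $U^0=\tilde u_0$, $\Q^0={\bf I}_h\nabla u_0$, $\Z^0={\bf I}_h(A\nabla u_0)$ (so that $\xQ^0=\xZ^0=0$) together with the first step (\ref{7-1}), and is of the same order. Applying Theorem~\ref{thm:stab} to the $\xi$-system then bounds $\|\ph\xU^{m+\half}\|+\|\ah\xQ^{m+\half}\|$ by the sum of these quantities; recovering $\|\xZ^{m+\half}\|$ by choosing $\bt_h=\xZ^{m+\half}$ in the $\xi$-version of (\ref{7-1bk}) and using coercivity of $A$ and boundedness of $B$, and finally adding the projection estimates of Theorem~\ref{thm:u-up} (for (\ref{eu2-d}) the remaining term $\ph\eU^{m+\half}$ is controlled by $\sup\|\eU_t\|$), yields (\ref{eu2-d}) and (\ref{eu2-d-b}).

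The step I expect to be most delicate is the handling of the discretized memory term. One must carry the quadrature-error functional $G^{\nph}$ through the summation-by-parts manipulations inside Theorem~\ref{thm:stab} and verify that it is the discrete time-derivative $\ph G^{\nph}$, rather than $G^{\nph}$ itself, whose sum is controlled at order $k^2$ by Lemma~\ref{lm:8-1}; this is exactly what the symmetric structure of the scheme (\ref{7-1b}) and the second-order accuracy of $\ii$ are designed to deliver. Alongside this, the regularity bookkeeping must be kept consistent: only first time derivatives of $u$ are required at the projection level, whereas the third and fourth time derivatives appear solely through the purely temporal truncation and enter only in the $L^2$-in-space, $L^1$-in-time $k^2$ term.
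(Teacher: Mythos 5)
Your proposal follows the paper's own proof essentially verbatim: the same Ritz--Volterra splitting, the same reduction of the $\xi$-system to the modified scheme (\ref{7-1ak})--(\ref{7-1ck}) with $F^n(v_h)=(\ps\eU^n,v_h)+(r^n,v_h)$ and $G^n=\ce^{n}(\tilde{\bq}_h)$, the same application of Theorem~\ref{thm:stab} together with Lemma~\ref{lm:8-1} (correctly noting that it is $\ph G^{\nph}$ that must be summed), and the same recovery of $\xZ^{m+\half}$ by testing the $\xi$-version of the flux equation with $\xZ^{m+\half}$. The only cosmetic deviation is your parenthetical claim that $\xQ^0=\xZ^0=0$, which the paper neither asserts nor needs---it uses only $\xU^0=0$ and bounds the initial energy $|||\Psi^{\half}|||$ directly from the first-step equation---but this does not affect the validity of your argument.
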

\begin{proof}
Since estimates for $\eU$, $\eQ$ and $\eZ$ are known from Theorem~\ref{thm:u-up}, it is enough to estimate $\xU$, $\xQ$ and
$\xZ$. Using (\ref{p1})-(\ref{p3}), we derive the following system
\begin{eqnarray}
&&\frac{2}{\dt}(\ph \xU^{\half},v_h)+\A_1(v_h,\xZ^{\half})+
J(\xU^{\half},v_h)=\frac{2}{\dt}(\ph \eU^{\half},v_h)+(2r^0,v_h)\label{8-0}\\
&& \A(\xQ^\nph, \bw_h)-\A_1(\xU^\nph,\bw_h)+J_1(\xZ^\nph,\bw_h)=0, \label{8-1}\\
&& \A_2(\xQ^\nph,{\bt}_h)-\A(\xZ^\nph, {\bt}_h )+
  \ii^{n+\half}(\B^{n+\half}(\xQ,{\bt}_h))\label{8-2}\\
&&\hspace*{6cm}=-\ce^{n+1/2}(\tilde{\bq}_h)({\bt}_h)\nonumber\\
&&(\ps \xU^n, v_h)+ \A_1(v_h,\xZ^{n;1/4})+J(\xU^{n;1/4},v_h)=
(\ps\eU^n, v_h)+(r^n,v_h),\label{8-3}
\hspace{-2cm}
\end{eqnarray}
where $\displaystyle r^0=\frac{1}{2}u_{tt}^\half+\frac{1}{\dt}
\left(u_1-\ph u^\half\right),$ and
$$
r^n=u_{tt}^{n;1/4}-\ps u^n=\frac{1}{12}\\
\int_{-k}^{k}(|t|-k)\left(3-2(1-|t|/k)^2\right)\frac{\partial^4 u}{\partial t^4}
(t^n+t)dt,\;\;n\geq 1.
$$
With $F^n(v_h)=(\ps\eU^n, v_h)+(r^n,v_h)$ and
$G^n({\bt}_h)=\ce^{n}(\tilde{\bq}_h)({\bt}_h)$,
we apply Theorem~\ref{thm:stab} to arrive at
$$
|||\Psi^{m+\half}|||\leq C\left\{
|||\Psi^\half|||+k\sum_{n=1}^{m}\big(||\ps\eU^n||+||r^n||\big)
+k\sum_{n=0}^{m}||\ph \ce^{n+1/2}(\tilde{\bq}_h)||
\right\},
$$
where $\Psi^n=(\xU^n,\xQ^n,\xZ^n)$. To estimate $\Psi^\half$ on the right hand side
of this inequality, we first note that,
$\xU^0=0$ since $U^0=\tilde{u}_0$, and hence,
$\displaystyle\xU^\half=\frac{k}{2}\ph \xU^\half$.
Now,  choose $v_h=\xU^\half$
in (\ref{8-0}), $v_h=\xZ^\half$ in (\ref{8-1}), and $v_h=\xQ^\half$ in (\ref{8-2}).
Adding the resulting equations, and taking into account that $\ii^0(\phi)=0$
and $\ce^0(\phi)=0$, we obtain
\begin{eqnarray*}
|||\Psi^\half|||^2
&\leq&
\left|\left(\ph\eU^\half,\ph \xU^\half\right)\right|+k\left|\left(r^0,\ph \xU^\half\right)\right|
+\frac{1}{2}\left|\ii^1(\B^1(\xQ,\xQ^\half)\right|
+\frac{1}{2}\left|\ce^1(\tilde{\bq}_h)(\xQ^\half)\right|\\
&\leq& C(\alpha)
\left(||\ph\eU^\half||+k||r^0||+||\ce^1(\tilde{\bq}_h)||\right)|||\Psi^\half||| +
\frac{Mk}{2\alpha}||A^\half\xQ^\half||^2 \\
&\leq& C(\alpha)
\left(||\ph\eU^\half||+k||r^0||+||\ce^1(\tilde{\bq}_h)||\right)|||\Psi^\half||| +
\frac{Mk}{2\alpha}|||\Psi^\half|||^2.
\end{eqnarray*}
For $ 0<k\leq k_0,$ $\big(1-(Mk)/(2\alpha)\big) >0 $ and hence,
$$
|||\Psi^\half|||\leq C\left\{||\ph\eU^\half||+k||r^0||+||\ce^1(\tilde{\bq}_h)||\right\},
$$
which shows that
\begin{equation}\label{8-5}
|||\Psi^{m+\half}|||\leq C\left\{
||\ph\eU^\half||+k\sum_{n=1}^{m}||\ps\eU^n||+k\sum_{n=0}^{m}||r^n||
+k\sum_{n=0}^{m}||\ph \ce^{n+1/2}(\tilde{\bq}_h)||
\right\}.
\end{equation}
To estimate the terms on the right hand side of (\ref{8-5}), we note that
\begin{eqnarray}\label{8-5a}
||\ph\eU^\half||\leq\frac{1}{k}\int_0^{\dt}||\eta_{\U t}(s)||\,ds,
\end{eqnarray}
and by Taylor series expansions, we arrive at
\begin{eqnarray}\label{8-5b}
k\sum_{n=1}^{m}||\ps\eU^n||&\leq&\frac{1}{k}\sum_{n=1}^{m}\left\{
\int_{t_n}^{t_{n+1}}(t_{n+1}-s)||\eta_{\U tt}(s)||\,ds+
\int_{t_{n-1}}^{t_n}(s-t_{n-1})||\eta_{\U tt}(s)||\,ds
\right\}\nonumber\\
&\leq&2\int_{0}^{t_{m+1}}||\eta_{\U tt}(s)||\,ds.
\end{eqnarray}
Further, we find that
$$||r^n||\leq Ck\int_{t_{n-1}}^{t_{n+1}}\left|\left|
\frac{\partial^4 u}{{\partial t^4}}(s)\right|\right|\,ds,\quad n\geq 1,$$
and
$$||r^0||\leq Ck||u_{ttt}||_{L^\infty(0,\dt/2;L^2(\Omega))}
\leq Ck\int_{0}^{t_{m+1}}
\left(\left|\left|\frac{\partial^3 u}{{\partial t^3}}(s)\right|\right|
+\left|\left|\frac{\partial^4 u}{{\partial t^4}}(s)\right|\right|\right)\,ds.$$
Hence,
\begin{equation}\label{s2}
k\sum_{n=0}^{m}||r^n||\leq Ck^2\int_{0}^{t_{m+1}}
\left(\left|\left|\frac{\partial^3 u}{{\partial t^3}}(s)\right|\right|
+\left|\left|\frac{\partial^4 u}{{\partial t^4}}(s)\right|\right|\right)\,ds.
\end{equation}
For the last term in (\ref{8-5}), a use of Lemma~\ref{lm:8-1} with the triangle
inequality yields
$$
k\sum_{n=0}^{m}||\ph\ce^{n+1/2}(\tilde{\bq}_h)||\leq Ck^2
\sum_{j=0}^2\int_{0}^{t_{m+1}}
\left(\left|\left|\frac{\partial^j \bq}{{\partial t^j}}(s)\right|\right|
+\left|\left|\frac{\partial^j \eta_{\bq}}{{\partial t^j}}(s)\right|\right|\right)\,ds,
$$
and hence, using the estimates  in Theorem~\ref{thm:u-up}, we deduce that
\begin{equation}\label{s3}
k\sum_{n=0}^{m}||\ph\ce^{n+1/2}(\tilde{\bq}_h)||\leq Ck^2
\sum_{j=0}^2\int_0^{t_{m+1}}
\left|\left|\frac{\partial^j u}{{\partial
t^j}}(s)\right|\right|_{H^{r+2}(\Omega)}ds.
\end{equation}
Substitute (\ref{8-5a})-(\ref{s3}) in (\ref{8-5}) and use the triangle inequality with  Theorem~\ref{thm:u-up}
we obtain the error estimates involving $u$ and $\bq$ in (\ref{eu2-d})-(\ref{eu2-d-b}). Now to complete the proof of  (\ref{eu2-d-b}), it remains to estimate $||\xZ^{m+\half}||$.
To do so, choose
${\bt}_h=\xZ^{m+\half}$ in (\ref{8-2}), and conclude that
$$||\xZ^{m+\half}||\leq C(M,T)(\max_{0\leq n\leq m}||\xQ^{n+\half}|| + ||\ce^{m+1/2}(\tilde{\bq}_h)||).$$
Finally, a use of the triangle inequality and Theorem~\ref{thm:u-up}
completes the rest of the proof.
\end{proof}

Below, we again recall a variant of Baker's nonstandard energy formulation
to prove  $L^2$-estimate for the error under reduced regularity conditions.
We shall introduce the following notations for proving the final theorem of  this paper.
Define
$$
\hat{\phi}^0=0,\qquad \hat{\phi}^n=k\sum_{j=0}^{n-1}\phi^{\jph}.
$$
Then,
$$\ph \hat{\phi}^\nph={\phi}^\nph,$$ and
$$k\sum_{j=1}^{n}\phi^{j;1/4}=\hat{\phi}^{\nph}-\frac{k}{2}\phi^{\half}.$$
Let $R^n=k\sum_{j=0}^{n}r^n$.
Multiplying (\ref{8-2}) and (\ref{8-3}) by $k$ and summing over $n$, we
derive the new system
\begin{eqnarray}
&& \A(\xQ^\nph, \bw_h)-\A_1(\xU^\nph,\bw_h)+J_1(\xZ^\nph,\bw_h)=0, \label{8-1b}\\
&& \A_2(\xQh^\nph,{\bt}_h)-\A(\xZh^\nph, {\bt}_h)+
\hii^{n+\half}(\B(\xQ,{\bt}_h))\label{8-2b}\\
&&\hspace*{6cm}=-\hce^{n+1/2}(\tilde{\bq}_h)({\bt}_h)\nonumber\\
&&(\ph\xU^\nph, v_h)+ \A_1(v_h,\xZh^\nph)+J(\xUh^\nph,v_h)=
(\ph\eU^\nph, v_h)+(R^n,v_h),\label{8-3b}
\hspace{-2cm}
\end{eqnarray}
where the meanings of $\hii^{n+\half}(\B(\xQ,{\bt}_h))$ and $\hce^{n+1/2}(\tilde{\bq}_h)({\bt}_h)$ are obvious. Notice that (\ref{8-3b}) is derived after cancellation using (\ref{8-0}).
Also, remark that (\ref{8-3b}) reduces to (\ref{8-0}) when $n=0$.
Now, choose $v_h=\xZh^\nph$
in (\ref{8-1b}), ${\bt}_h=\xQ^\nph$ in (\ref{8-2b}), and
$v_h=\xU^\nph$ in (\ref{8-3b}).
Adding the resulting equations, we find that
\begin{eqnarray} \label{8-4b}
%\begin{split}
(\ph\xU^\nph,\xU^\nph)&+&\A_2(\xQh^\nph,\xQ^\nph)+J_1(\xZ^\nph,\xZh^\nph)+
J(\xUh^\nph,\xU^\nph)\nonumber\\
&=&-\hii^{n+\half}(\B(\xQ,\xQ^\nph))
-\hce^{n+1/2}(\tilde{\bq}_h)(\xQ^\nph) \nonumber\\
&&+ (\ph\eU^\nph,\xU^\nph)+(R^n,\xU^\nph).
%\end{split}
\end{eqnarray}
Note that $\xi_U^0=0$,
$\xQh^0=0$ and $\xQ^\nph=\ph \xQh^\nph,$ and
$$
(\ph\xU^\nph, \xU^\nph)=\frac{1}{2k}
\left(||\xU^{n+1}||^2-||\xU^{n}||^2\right).
$$
Other terms on the left hand side of (\ref{8-4b}) can be rewritten in a similar way.
On substitution and then
summing from $n=0$ to $m$ after  multiplying by $2k,$
we arrive at
\begin{eqnarray}\label{8-5b}
||\xU^{m+1}||^2&+&\A_2(\xQh^{m+1},\xQh^{m+1})+J_1(\xZh^{m+1},\xZh^{m+1})+
J(\xUh^{m+1},\xUh^{m+1})\nonumber\\
&=&-2k\sum_{n=0}^{m}\hii^{n+\half}(\B(\xQ,\xQ^\nph))
-2k\sum_{n=0}^{m}\hce^{n+1/2}(\tilde{\bq}_h)(\xQ^\nph)\nonumber\\
&+&2k\sum_{n=0}^{m}(\ph\eU^\nph,\xU^\nph)+2k\sum_{n=0}^{m}(R^n,\xU^\nph)\\
&=&2(I_1^m+I_2^m+I_3^m+I_4^m).\nonumber
\end{eqnarray}
For convenience, we use the following notations: $B_r^s=B(t_{s},t_r)$, and
$\Delta_i\phi_i=(\phi_i-\phi_{i-1})/k$. Further, let
$|||(\xU^n,\xQh^n)|||^2=||\xU^n||^2+||A^\half\xQh^n||^2$ and  for some $m^{\star}\in [0;m+1]$, define
 $$|||(\xU^{m^{\star}},\xQh^{m^{\star}})|||=\max_{0\leq n\leq m+1}|||(\xU^n,\xQh^n)|||.$$
For the $I_1^m$ term, we observe that
\begin{eqnarray*}
\ii^{j+1}(B^{j+1}(\xQ,\xQ^\nph))
&=& k\sum_{i=0}^jB^{j+1}_\iph\xQ^\iph\cdot\xQ^\nph
=k\sum_{i=0}^jB^{j+1}_\iph\ph\xQh^\iph\cdot\xQ^\nph\\
&=& \sum_{i=0}^j\left(B^{j+1}_\iph\xQh^{i+1}-B^{j+1}_\imh\xQh^i\right)\cdot\xQ^\nph\\
&&-\sum_{i=0}^j\left(B^{j+1}_\iph-B^{j+1}_\imh\right)\xQh^i\cdot\xQ^\nph\\
&=&B^{j+1}_\jph\xQh^\jph\cdot\ph\xQh^{n+1}-k\sum_{i=0}^j\left(\Delta_i(B^{j+1}_\iph)\right)
\xQh^i\cdot\ph\xQh^\nph.
\end{eqnarray*}
Hence,
$$
k^2 \sum_{n=0}^m\sum_{j=0}^n\ii^{j+1}(B^{j+1}(\xQ,\xQ^\nph))=
k^2\sum_{n=0}^m\Theta^{n+1}\cdot\xQ^\nph-k^2\sum_{n=0}^m\Upsilon^{n+1}\cdot\xQ^\nph,
$$
where
$$\Theta^{n+1}=\sum_{j=0}^nB^{j+1}_\jph\xQh^{j+1}\quad\mbox{ and } \quad
\Upsilon^{n+1}=k\sum_{j=0}^n\sum_{i=0}^j\left(\Delta_i(B^{j+1}_\iph)\right)\xQh^i.
$$
Next, we estimate the terms $\Theta^{n+1}$ and $\Upsilon^{n+1}$. Note that
\begin{eqnarray*}
k^2\sum_{n=0}^m\Theta^{n+1}\cdot\ph\xQh^\nph
&=&k\sum_{n=0}^m(\Theta^{n+1}\cdot\xQh^{n+1}-\Theta^{n}\cdot\xQh^{n})
-k\sum_{n=0}^m(\Theta^{n+1}-\Theta^{n})\cdot\xQh^{n}\\
&=&k\Theta^{m+1}\cdot\xQh^{m+1}-k\sum_{n=0}^mB^{n+1}_\nph\xQh^{n+1}\cdot\xQh^{n}\\
&=&k\sum_{n=0}^m B^{n+1}_\nph \xQh^{n+1}\cdot\xQh^{m+1}-k\sum_{n=0}^mB^{n+1}_\nph
\xQh^{n+1}\cdot\xQh^{n}.
\end{eqnarray*}
Therefore, using the Cauchy-Schwarz inequality,  we arrive at
$$
\left|k^2\sum_{n=0}^m\Theta^{n+1}\cdot\ph\xQh^\nph\right|
\leq
\frac{Mk}{\alpha} ||A^\half\xQh^{m+1}||^2+C(M,\alpha)\left(k\sum_{n=0}^m ||A^\half\xQh^{n}||\right)
|||(\xU^{m^{\star}},\xQh^{m^{\star}})|||.
$$
Similarly, we now obtain
\begin{eqnarray*}
k^2\sum_{n=0}^m\Upsilon^{n+1}\cdot\ph\xQh^\nph
&=&k\Upsilon^{m+1}\cdot\xQh^{m+1}-k\sum_{n=0}^m(\Upsilon^{n+1}-\Upsilon^{n})\cdot\xQh^{n}\\
&=&k^2\left(\sum_{j=0}^m\sum_{i=0}^j\left(\Delta_i(B^{j+1}_\iph)\right)
\xQh^i\right)\cdot\xQh^{m+1}\\
&&-k^2\sum_{n=0}^m\left(\sum_{i=0}^n\left(\Delta_i(B^{n+1}_\iph)\right)\xQh^i
 \right)\cdot\xQh^{n}.
\end{eqnarray*}
Since, it is assumed that  $||D_{t,s}B(t,s)||\leq M$, we deduce that
$$
\left|k^2\sum_{n=0}^m\Upsilon^{n+1}\cdot\ph\xQh^\nph\right|
\leq C(M,\alpha,T)\left(k\sum_{n=0}^m ||A^\half\xQh^{n}||\right)|||(\xU^{m^{\star}},\xQh^{m^{\star}})|||.
$$
Since, similar bounds can be obtained for the other terms in $I_1^m$,
%$\sum_{n=0}^{m}\hii^{n+\half}(\B(\xQ,\xQ^\nph))$,
we finally conclude that
$$
|I_1^m|\leq \frac{Mk}{2\alpha} ||A^\half\xQh^{m+1}||^2+
C(M,\alpha,T)\left(k\sum_{n=0}^m ||A^\half\xQh^{n}||\right)|||(\xU^{m^{\star}},\xQh^{m^{\star}})|||.
$$
Now, with $\Lambda^{n+1}=\sum_{j=0}^{n}\ce^{j+1}(\tilde{\bq}_h)$, we observe that
\begin{eqnarray*}
k^2\sum_{n=0}^{m}\sum_{j=0}^{n}\ce^{j+1}(\tilde{\bq}_h)(\xQ^\nph)
&=&k^2\sum_{n=0}^{m}\Lambda^{n+1}\ph\xQh^\nph\\
&=&k\Lambda^{m+1}(\xQh^{m+1})-k\sum_{n=0}^m(\Lambda^{n+1}-\Lambda^{n})(\xQh^{n})\\
&=&k\sum_{j=0}^{m}\ce^{j+1}(\tilde{\bq}_h)(\xQh^{m+1})
-k\sum_{n=0}^{m}\ce^{n+1}(\tilde{\bq}_h)(\xQh^n).
\end{eqnarray*}
Since, the terms in $I_2^m$ have a similar form, we deduce that
$$
|I_2^m|\leq C(\alpha)k\sum_{n=0}^{m}\left(||\ce^{n+1}(\tilde{\bq}_h)||\right)
|||(\xU^{m^{\star}},\xQh^{m^{\star}})|||.
$$
Finally, it follows that
$$
|I_3^m+I_4^m|\leq Ck \sum_{n=0}^{m}\left(\left|\left|\ph\eU^\nph\right|\right|+||R^n||\right)
|||(\xU^{m^{\star}},\xQh^{m^{\star}})|||.
$$
On substituting the above estimates in (\ref{8-5b}), using kickback arguments and similar arguments  in previous Theorems, we arrive at
\begin{eqnarray*}
\Big(1-(M/2\alpha)k\Big)\,|||(\xU^{m+1},\xQh^{m+1})||| &\leq &C
k \sum_{n=0}^{m}\left(
\left|\left|\ph\eU^\nph\right|\right|+||R^n||+||\ce^{n+1}(\tilde{\bq}_h)||+||A^\half\xQh^{n}||\right)\\
 &\leq & Ck \sum_{n=0}^{m}\left(
\left|\left|\ph\eU^\nph\right|\right|+||R^n||+||\ce^{n+1}(\tilde{\bq}_h)||+|||(\xU^{n},\xQh^{n})|||\right).
\end{eqnarray*}
Since for $0 < k \leq k_0,$ $\big(1-(M/2\alpha)k\big)$ can be made positive, then
an application of the  discrete Gronwall lemma yields
$$||\xU^{m+1}||+||A^\half\xQh^{m+1}||\leq
Ck \sum_{n=0}^{m}\left(
\left|\left|\ph\eU^\nph\right|\right|+||R^n||+||\ce^{n+1}(\tilde{\bq}_h)||\right).
$$
The first two terms on the right hand side can be bounded as follows:
$$
k\sum_{n=0}^{m}\left|\left|\ph\eU^\nph\right|\right|\leq
\int_0^{t_{m+1}}\left|\left|\eta_{\U t}(s)\right|\right|\,ds,
$$
and
$$
k\sum_{n=0}^{m}||R^n||\leq T\max_{0\leq n\leq m}||R^n||\leq
kT\sum_{n=0}^{m}||r^n||.
$$
Finally, by taking into account (\ref{s2}) and (\ref{s3}), we  use the
triangle inequality with the first estimate in Theorem~\ref{thm:u-up}
to prove the error estimates in the following theorem.
\begin{theorem}\label{thm:6-2}
%Let $\Omega$ be a bounded convex polygon domain in $\R^2$ and
Let $(u,\bq,\bs)$ be the solution of $(\ref{21})$-$(\ref{23})$.
Further, let $(U^n,\Q^n,\Z^n)\in V_h\times{\bW}_h\times  {\bW}_h$ be the
solution of $(\ref{7-1})$-$(\ref{7-1c})$. Assume that $U^0=\tilde{u}_0$,
$\Q^0={\bf I}_h\nabla u_0$ and $\Z^0={\bf I}_h(A\nabla u_0)$.
Then there exists a constant $C>0$ independent of $h$ and $k$,
and a constant $k_0>0$ independent of $h$, such that for $0<k<k_0$ and
$m=0,1,\cdots,N-1$
\begin{eqnarray*}%\label{6-2-b}
\qquad \|u(t_{m+1})-U^{m+1}\|_{L^2(\Omega)}
&\leq&  C \frac{h^{P+D}}{p^{\sQ+\sZ}}\Big(\|u_0\|_{H^{r+2}(\mathcal{T}_h)}
+\sum_{j=0}^{1}\left\|\frac{\partial^{j}u }{\partial t^j}\right\|_{L^1(H^{r+2}(\mathcal{T}_h))}\Big)\nonumber\\
&&+ Ck^2\sum_{j=3}^{4}\int_{0}^{t_{m+1}}
\left|\left|\frac{\partial^j u}{{\partial t^j}}(s)\right|\right|\,ds,
\end{eqnarray*}
where $P,D,\sQ$ and $\sZ$ as in Theorem~$\ref{thm:6-1}.$
\end{theorem}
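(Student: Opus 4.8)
The plan is to follow the fully-discrete counterpart of the nonstandard energy argument used for Theorem~\ref{FE-2}, now weakened in the same spirit as the passage from Theorem~\ref{thm:6-1} to the reduced-regularity estimate. First I would split the error through the extended mixed Ritz--Volterra projection of Section~4, writing $u^n-U^n=\eU^n-\xU^n$ (and analogously for $\bq$ and $\bs$). Since Theorem~\ref{thm:u-up} already controls the projection error $\eU$ in the required $hp$-weighted norm, the whole task reduces to bounding $\xU^{m+1}$ in $L^2$. To avoid differencing the data twice in time---which is exactly what forces the extra regularity in Theorem~\ref{thm:6-1}---I would first \emph{sum} the discrete equations in time (weighted by $k$) so as to pass to the integrated variables $\xUh,\xQh,\xZh$, producing the system (\ref{8-1b})--(\ref{8-3b}); crucially, the residual there involves only $\ph\eU^\nph$ and $R^n$ rather than second differences of $\eU$.

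Next I would test this integrated system with the natural energy choices, namely $v_h=\xZh^\nph$ in (\ref{8-1b}), $\bt_h=\xQ^\nph$ in (\ref{8-2b}) and $v_h=\xU^\nph$ in (\ref{8-3b}), add, and use $\xQ^\nph=\ph\xQh^\nph$ together with the telescoping identity $(\ph\xU^\nph,\xU^\nph)=\tfrac1{2k}(\|\xU^{n+1}\|^2-\|\xU^{n}\|^2)$. Summing over $n$ after multiplying by $2k$ collapses the left-hand side into the discrete energy $\|\xU^{m+1}\|^2+\A_2(\xQh^{m+1},\xQh^{m+1})+J_1(\xZh^{m+1},\xZh^{m+1})+J(\xUh^{m+1},\xUh^{m+1})$, and leaves on the right the four contributions $I_1^m$ (the memory term), $I_2^m$ (the quadrature error $\hce$), $I_3^m$ (the projection residual $\ph\eU$) and $I_4^m$ (the time-consistency residual $R^n$).

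I expect the memory term $I_1^m$ to be the main obstacle, since it carries a nested time summation of $B$ paired against $\xQ^\nph=\ph\xQh^\nph$ and cannot be estimated directly. The device is repeated discrete summation by parts, moving the difference operator off $\xQh$ via the product rule (\ref{eq:formula}); this introduces the auxiliary sums $\Theta^{n+1}$ and $\Upsilon^{n+1}$, which I would bound by the Cauchy--Schwarz inequality using the boundedness of $B$ and of $D_{t,s}B$. The delicate point is that this generates a term proportional to $k\|A^\half\xQh^{m+1}\|^2$ on the right; I would choose $k\le k_0$ small enough that a kickback absorbs it into the energy on the left, leaving the Gronwall-type factor $k\sum_{n=0}^m\|A^\half\xQh^n\|$. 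An entirely analogous summation by parts, with $\Lambda^{n+1}=\sum_j\ce^{j+1}(\tilde{\bq}_h)$, disposes of $I_2^m$, while $I_3^m$ and $I_4^m$ follow immediately from Cauchy--Schwarz.

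After the kickback, the discrete Gronwall lemma yields $\|\xU^{m+1}\|+\|A^\half\xQh^{m+1}\|\le Ck\sum_{n=0}^m(\|\ph\eU^\nph\|+\|R^n\|+\|\ce^{n+1}(\tilde{\bq}_h)\|)$. It then remains to estimate the three sums: $k\sum\|\ph\eU^\nph\|\le\int_0^{t_{m+1}}\|\eta_{\U t}\|\,ds$; the consistency residual through $k\sum\|R^n\|\le kT\sum\|r^n\|$ together with the Taylor bound (\ref{s2}), which produces the $k^2\int(\|\partial_t^3u\|+\|\partial_t^4u\|)\,ds$ term; and the quadrature error by Lemma~\ref{lm:8-1}, which supplies the second-order factor $k^2$. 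Feeding the $\eta_{\U t}$ and $\tilde{\bq}_h$ norms through Theorem~\ref{thm:u-up} converts the first and third sums into the stated $h^{P+D}/p^{\sQ+\sZ}$ bound, and a final triangle inequality $\|u^{m+1}-U^{m+1}\|\le\|\eU^{m+1}\|+\|\xU^{m+1}\|$ combined once more with Theorem~\ref{thm:u-up} completes the proof.
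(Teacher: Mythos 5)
Your proposal is correct and follows essentially the same route as the paper: the same passage to the time-summed system (\ref{8-1b})--(\ref{8-3b}) with residuals $\ph\eU^\nph$ and $R^n$, the same test functions, the same double summation by parts via $\Theta^{n+1}$, $\Upsilon^{n+1}$ (and $\Lambda^{n+1}$ for the quadrature term) with kickback of the $k\|A^\half\xQh^{m+1}\|^2$ term for $k\le k_0$, and the same discrete Gronwall conclusion followed by the bounds on $k\sum\|\ph\eU^\nph\|$, $k\sum\|R^n\|$ and Lemma~\ref{lm:8-1}, finished by the triangle inequality with Theorem~\ref{thm:u-up}.
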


\begin{remark}
One of the difficulties involved in the time stepping scheme applied to the present problem on hyperbolic integro-differential equation is that all the values of  $\Q^j,\,j=0,\cdots,\cdots, n-1$ have to be retained in order to compute the current value, say $\Q^n.$ This, in turn, causes a  great demand for data storage. One way to overcome this difficulty is to use sparse quadrature rules proposed in Sloan and Thom\'ee \cite{st} and analyzed for hyperbolic problems by Pani {\it et al.} \cite{PTW}. This way, one may substantially reduce the storage requirements in the computation. Since the error analysis for the present problem with sparse quadrature rules will be quite involved, it is not possible to address it in this article.
\end{remark}

% % % % % % % % % % % % % % % % % % % % % % % % % % % % % % % % % % % % % % % %
 \section{Numerical Experiments}
In this section, we present the performance of the proposed DG methods for
the linear hyperbolic integro-differential equations of the form (\ref{1})-(\ref{4}) with $A=I,$ $B(x,t,s)=\exp(t-s)I$,
$\Omega=(0,1)\times(0,1)$ and $T=1$.
%We will investigate the convergence of  LDG for $\|u-u_h^n\|$ and $\|\bf{q-q}_h^n\|$.
We divide $\Omega$ into regular uniform closed triangles and let $0=t_0<t_1<\cdots < t_N=T$
be a given partition of the time interval $(0,T]$ with step length $\dt=\frac{T}{N}$ for
some positive integer $N$.
%Let $U^n$ denote the approximation of $u_h$ at $t=t_n$.

%We now derive the linear system obtained from the LDG scheme.
We first  discuss the numerical procedure for the DG schemes. Let
$(\phi_i)_{i=1}^{N_h}$ be the basis functions for the finite dimensional
space $V_h$, where $N_h$ denotes the dimension of the space $V_h$ and let $(\boldsymbol\chi_i)_{i=1}^{M_h}$ be the basis functions for the finite dimensional
space ${\bW}_h$, where $M_h$ denotes the dimension of the space ${\bW}_h$.

Then, we define the following matrices
\begin{eqnarray*}
\label{nu2}
M&=&[M(i,j)]_{1\leq i ,j\leq N_h},~~~A_1=[A_1({i,j})]_{1\leq i\leq N_h,1\leq j\leq M_h },\\
J_1&=&[J_1(i,j)]_{1\leq i, j\leq M_h },~~~
A=[A({i,j})]_{1\leq i,j\leq M_h},~~~J=[J({i,j})]_{1\leq i ,j\leq N_h},\;\;\;\;\;
\end{eqnarray*}
and the vector
\begin{eqnarray}
\label{nu3}
L=[L(i)]_{1\leq i\leq N_h},
\end{eqnarray}
where
\begin{eqnarray*}
M(i,j)&=&\sum_K \int_K \phi_i\phi_j \,dx,\hspace{1cm}A_1({i,j})=\sum_K \int_K \phi_i\cdot \nabla\boldsymbol \chi_j \,dx -\int_\Gamma\{\!\!\{\boldsymbol\chi_j\}\!\!\}[\![\phi_i]\!]dS,\hspace{-1cm}\\
J_1(i,j)&=& \sum_{e\in \Gamma_I}\int_e C_{22}[\![ \chi_i]\!][\![ \chi_j]\!]\,dS,\hspace{1cm}A({i,j})(t)= \int_\Omega{\boldsymbol\chi}_i\cdot {\boldsymbol \chi}_j \,dx,\\
J({i,j})&=&\sum_{e\in \Gamma}\int_e C_{11}[\![ \phi_i]\!][\![ \phi_j]\!] \,dS,
\end{eqnarray*}
and $~~L(i)=\displaystyle\sum_K\int_{K}f \phi_i \,dx.$

\noindent
Write $U^n=\displaystyle\sum_{i=1}^{N_h}\alpha_i^n\phi_i $, where
${\boldsymbol\alpha}^n=[\alpha_1^n,\alpha_2^n,\cdots,\alpha_{N_h}^n]$,
$\Q^n=\displaystyle\sum_{i=1}^{M_h}\beta_i^n\boldsymbol\chi_i $, where
$ {\boldsymbol\beta}^n=[\beta_1^n,\beta_2^n,\cdots\beta_{M_h}^n]$ and
$\Z^n=\displaystyle\sum_{i=1}^{M_h}\gamma_i^n\boldsymbol\chi_i $, where
$ {\boldsymbol\gamma}^n=[\gamma_1^n,\gamma_2^n,\cdots,\gamma_{M_h}^n]$ \\

Now using the basis functions for $V_h$ and ${\bW}_h$,
(\ref{7-1a})-(\ref{7-1c}) can be reduced to the following matrix form:
\begin{eqnarray}
A\boldsymbol\beta^{n+\half}+A_1\boldsymbol\alpha^{n+\half}+J_1\boldsymbol\gamma^{n+\half}&=&0\label{m1},\\
-\left(1+\frac{\dt}{2} \exp(\dt/2)\right)A\boldsymbol\beta^{n+\half}+
A\boldsymbol\gamma^{n+\half}&=&\displaystyle\Psi^{n+\half}\label{m2},\\
(M+\frac{\dt^2}{2} J)\boldsymbol\alpha^{n+\half}-
\frac{\dt^2}{2}A_1'{\boldsymbol\gamma}^{n+\half}&=&\Phi^{n+\half},\label{m3}
\end{eqnarray}
where
$$
\Psi^{n+\half}=\frac{\dt}{2} \displaystyle\sum_{i=0}^{n-1}\left[\exp(t_{n+1}-t_{i+1/2})+\exp(t_{n}-t_{i+1/2})\right]
A\boldsymbol\beta^{i+1/2},
$$
and
$$
\Phi^{n+\half}=M(3\boldsymbol\alpha^{n}-\boldsymbol\alpha^{n-1})+
\frac{\dt^2}{2}[A_1'{\boldsymbol\gamma}^{n-\half}
-J {\boldsymbol \alpha}^{n-\half}]+\dt^2 L^{n+\half}.
$$

Then, the unknown vector
$[\boldsymbol\alpha^{n+\half}\,\boldsymbol\beta^{n+\half}\, \boldsymbol\gamma^{n+\half}]$
is the solution of a linear system with a coefficient matrix

\begin{displaymath}
\boldsymbol A=\left[
\begin{array}{ccc}
 A_1& A & J_1 \\
0 & -\left(1+\frac{\dt}{2} \exp(\dt/2)\right)A & A \\
M+\frac{\dt^2}{2} J&0 & -\frac{\dt^2}{2} A_1'
 \end{array}\right],
 \end{displaymath}
and a right hand side
\begin{displaymath}
\boldsymbol b=
 \left[\begin{array}{c} 0\\
 \Psi^{n+\half}\\
 \Phi^{n+\half}\end{array}
 \right].
 \end{displaymath}
The solution will
provide the values of $U^{n+\half}$, ${\bf Q}^{n+\half}$ and $\Z^{n+\half}$ for $n=1,\cdots,N-1$.

\noindent
{\bf Convergence of $\|u(t_n)-U^n\|$ and $\|\bs(t_n)-\Z^n\|$.}  We show the order of convergence in the $L^2$-norm of the error in the flux $\bs$ and in the $L^2$-norm of the error in the velocity $u$. We observe that the optimal order of convergence predicted by our theory (see Table \ref{OC}) is achieved.

\begin{table}[H]
\caption{Order of convergence of $\|e_U(t_N)\|,$ when $t_N=1$.}
 \begin{center}
\begin{tabular}{|c|c|c|c|c|c|c|}
\hline
 %& & & & & & \\
$C_{11}$ $\rightarrow$&  $O(1)$ & $O(1)$ & $O(1) $  & $O(h^{-1})$ & $O(h^{-1})$  & $O(h^{-1}) $\\
$C_{22}$ $\rightarrow$& 0      & $O(1)$ & $O(h)$ & 0           &   $O(1)$      & $O(h)$ \\
\hline
 p = 1 & 2.1589 & 2.4079 & 2.2915 & 2.3508 & 2.2166   & 2.2752\\
\hline
p = 2  & 3.7140 & 3.4246 & 3.4523 & 3.3009 & 3.4130    & 3.1747\\
\hline
 p = 3 & 4.4037 & 4.0778 & 4.5808 & 4.1506 & 3.4890    & 3.9036\\
 \hline
\end{tabular}
\end{center}
\label{1L}
\end{table}

\begin{table}[H]
\caption{Order of convergence of $\|{\bf e}_{\Z}(t_N)\|$ when $t_N=1$}
\label{1H}
\begin{center}
\begin{tabular}{|c|c|c|c|c|c|c|}
\hline
 %& & & & & & \\
$C_{11}$ $\rightarrow$&  $O(1)$& $O(1)$ & $O(1)$   & $O(h^{-1})$ & $O(h^{-1})$  & $O(h^{-1}) $\\
$C_{22}$ $\rightarrow$& 0      & $O(1)$ & $O(h)$ & 0           &   $O(1) $     & $O(h)$ \\
\hline
 p = 1 & 1.3815 & 1.1904 & 1.1681 & 1.0763& 1.1414 & 1.0252 \\
\hline
p = 2 &2.3776 & 2.5265 &  2.3054 & 2.3040&2.2181 &  2.3379 \\
 \hline
 p = 3 &3.2914 & 3.3250 &  3.5694 &  3.5365& 3.4465&3.3012   \\
 \hline
\end{tabular}
\end{center}
\end{table}

\noindent
{\bf Example}: Choose $f$ in such a way that the exact solution is $$u(x,y,t)=\exp(t)sin({\pi}{x})sin({\pi}y).$$
We compute the order of convergence for $e_U$ and ${\bf e}_{\Z}$ at $t_N=1$ for the cases $1\le p\le 3$ with different choices of stabilization parameters $C_{11}$ and $C_{22}$. Tables \ref{1L} and \ref{1H} present the computed order of convergence for $\|e_U\|$ and $\|{\bf e}_{\Z}\|$ at $t_N=1$, respectively. In Figures \ref{1u} and \ref{1s}, we present the convergence behavior of $\|e_U\|$ and $\|{\bf e}_{\Z}\|$ at $t_N=1$, respectively with the mesh function $h$ and for $1\le p\le 3$ on uniform triangular meshes when $C_{11}=O(\frac{1}{h})$ and $C_{22}=O(h)$ . We observe that the computed order of convergence match with the predicted order of convergence.

\section{Conclusion}
In this paper, we have proposed and analyzed an $hp$-LDG method for a hyperbolic type integro-differential equation. Compared to the elliptic case \cite{ccps}, \cite{ps}, we have, in this article, established similar $hp$-error estimates for the semidiscrete scheme after suitably modifying the numerical fluxes. Due to the presence of integral term, an introduction of an expanded mixed Ritz-Volterra projection helps to achieve optimal estimates. Further, we have applied a second order implicit method to the semidiscrete scheme to derive a completely discrete scheme and have derived optimal error estimates. Finally, we have also discussed some numerical results.

\vspace{.5cm}

\begin{figure}
    \begin{center}
    \includegraphics*[width=12.0cm,height=6cm]{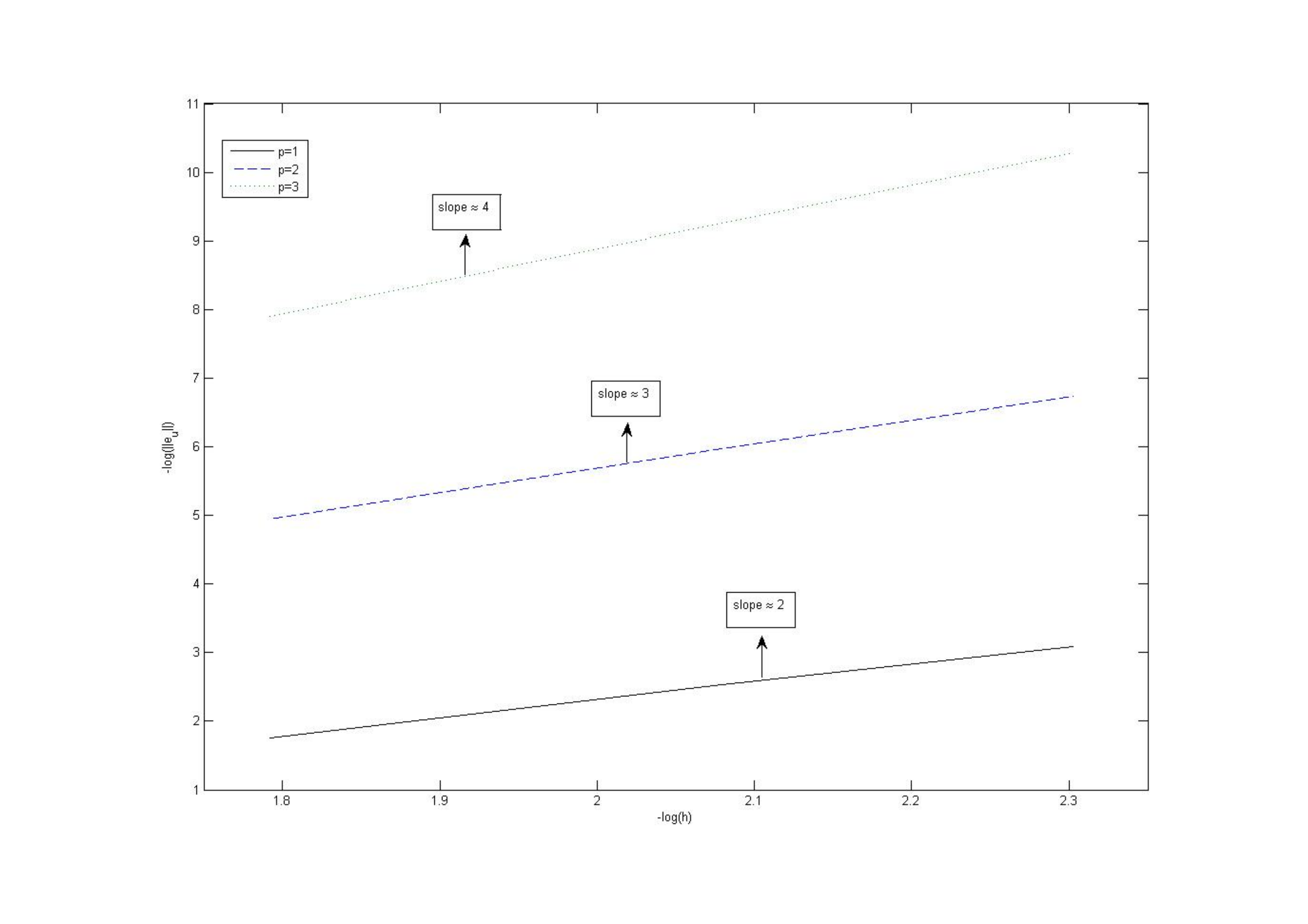}
       \caption{Order of convergence for $\|e_U\|$ at $t_N=1$ when $C_{11}=O(\frac{1}{h})$ and $C_{22}=O({h})$.}
       \label{1u}
    \end{center}
	\end{figure}
	
\begin{figure}
    \begin{center}
   \includegraphics*[width=10.0cm,height=6cm]{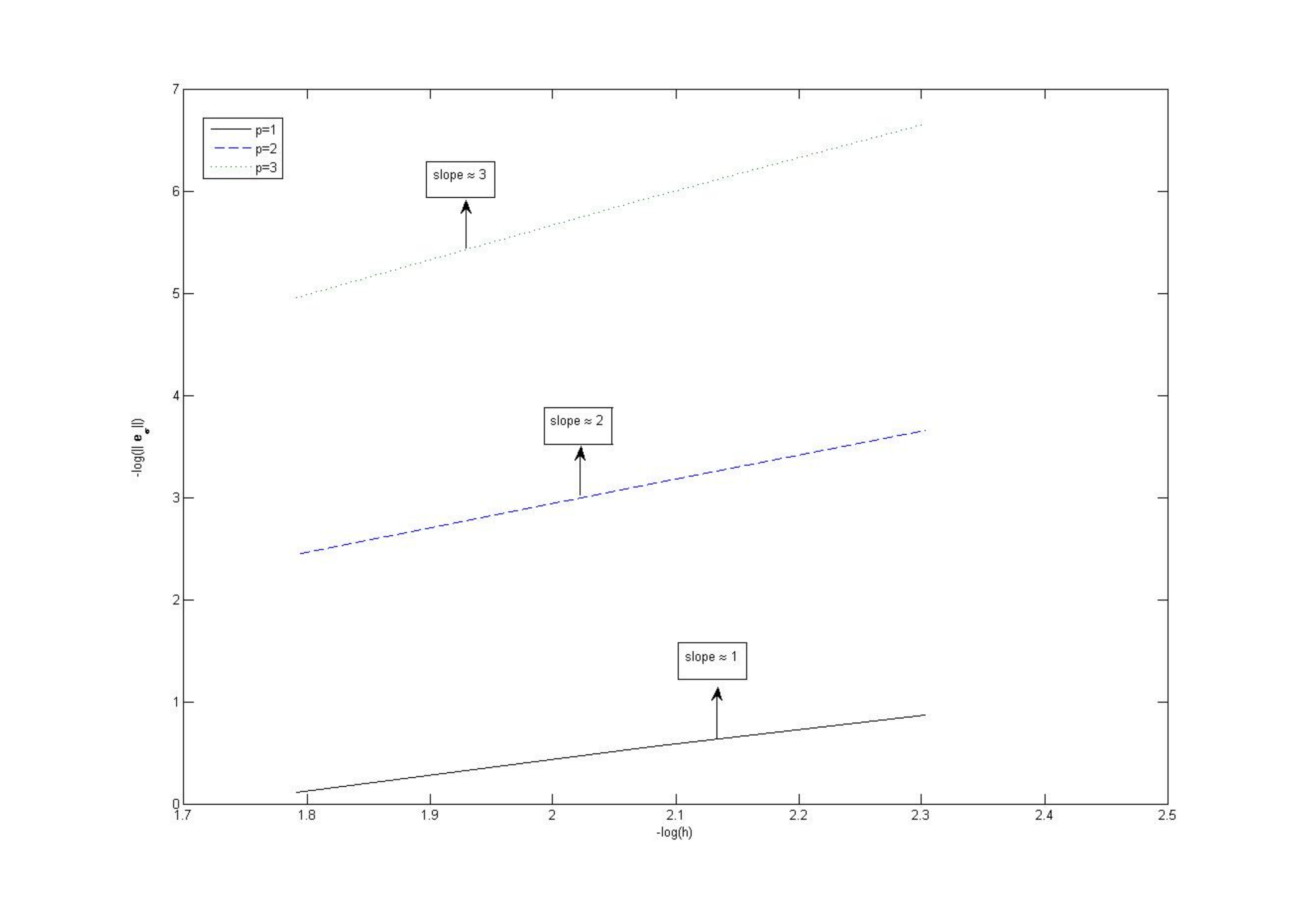}
       \caption{Order of convergence for $\|{\bf e}_{\Z}\|$ at $t_N=1$ when $C_{11}=O(\frac{1}{h})$ and $C_{22}=O({h})$.}
       \label{1s}
    \end{center}
	\end{figure}

% % % % % % % % % % % % % % % % % % % % % % % % % % % % % % % % % % % % % % % % % % % % % %

\noindent
{\bf Acknowledgment:} The first two authors gratefully acknowledge the research
support of the Department of Science and Technology, Government of India
through the National Programme on Differential Equations: Theory, Computation and Applications vide DST Project No.SERB/F/1279/2011-2012. The first author
acknowledges the support by Sultan Qaboos University under Grant IG/SCI/DOMS/31/02.

\end{document}